\documentclass[11pt,a4paper]{amsart}
\usepackage{amsmath}
\usepackage{amssymb}
\usepackage{hyperref}
\usepackage{cleveref}
\usepackage{mathrsfs}
\usepackage{tikz}

\usepackage{graphicx}
\graphicspath{{./figures/}}
\usepackage{a4wide}
\usetikzlibrary{calc}
\usepackage{tikz-cd}
\usepackage{adjustbox}
\usepackage{verbatim}
\usepackage{soul} 

\usepackage[a4paper, margin=3.20cm]{geometry}
\usepackage{xifthen}
\usepackage{pdfpages}
\usepackage{transparent}
\usepackage{import}

\author{Roberto Ladu}
\address{Ruhr-Universität Bochum, Universitätsstrasse 150, 44801 Bochum, Germany}
\email[Roberto Ladu]{roberto.ladu@rub.de}

\author{Simone Tagliente}
\address{Alfréd Rényi Mathematical Research Institute, 1053 Budapest, Réaltanoda Street 13-15, and Eötvös Loránd
University, Pázmány Péter sétány 1/C., H-1117 Budapest, Hungary
}
\email[Simone Tagliente]{tagliente.simone@renyi.hu}

\newtheorem{thm}{Theorem}[section]
\newtheorem{prop}[thm]{Proposition}
\newtheorem{lem}[thm]{Lemma}

\theoremstyle{definition}
\newtheorem{definition}[thm]{Definition}

\theoremstyle{remark}
\newtheorem{remark}[thm]{Remark}

\numberwithin{equation}{section}

\newcommand{\N}{\mathbb{N}}
\newcommand{\Z}{\mathbb{Z}}

\newcommand{\R}{\mathbb{R}}

\renewcommand{\S}{\mathbb{S}}

\newcommand{\CP}{\mathbb{CP}}
\newcommand{\T}{\mathbb{T}}

\newcommand{\D}{\mathbb{D}}

\newcommand{\genus}{\mathbf{g}}

\newcommand{\Spinc}{{\mathrm{Spin}^c}}
\newcommand{\X}{\mathcal{X}}
\newcommand{\BaykurHamada}{B}
\newcommand{\BaykurHamadaSurgered}{\mathcal{B}}

\newcommand{\Szabo}{Szab\'{o} }

\title{On smooth structures over $4$-manifolds with fundamental group of even order}

\begin{document}
\maketitle
\begin{abstract}
    We show that any topological, closed, oriented, non-spin $4$-manifold with fundamental group $\Z_{4k}$ and $\min(b_2^+, b_2^-)\geq 15$, has either none or infinitely many distinct smooth structures. 
    Furthermore, we construct infinitely many non-diffeomorphic, irreducible, smooth structures on manifolds with signature zero, $b_2^+$ even and fundamental group $\Z_2\times G$, for any finite group $G$.
    This extends the results of Baykur-Stipsicz-Szab\'{o} \cite{BaykurStipsiczSzabo}.
\end{abstract}
\section{Introduction}
It is only in dimension four that a topological manifold can carry infinitely many non-diffeomorphic smooth structures
\cite{KirbySiebenmann, Freedman82, donaldson1987irrationality}.  
Whether every smoothable topological $4$-manifold has this property is still unknown. 
A substantial body of work has focused on constructing closed, oriented,  simply-connected examples with decreasing Euler characteristic, aiming at approaching the lower bound of $\S^4$ \cite{FintushelSternDoubleNodeNeigh,ParkStipsiczSzaboExotic,BaldridgeKirkSymplectic,akhmedov2010exotic}. 
Regarding the non simply-connected case, Park constructed examples of large Euler characteristic with any fundamental group \cite{ParkGeographyOfSymplectic}.
Later, smaller examples were found for finite fundamental groups \cite{Topkara,TorresYazinski} and even smaller examples were constructed by Torres
for the fundamental groups $\Z, \Z_p$ or $\Z_p\times \Z_q$  for certain values of $p,q$ \cite{Torres1, Torres2}. All these examples have $b_1 + b_2^+$ odd because have a symplectic representative.
In \cite{LevinePiccirillo2023new}, Levine, Lidman, and Piccirillo constructed the first exotic smooth structure on a negative definite manifold with fundamental group $\Z_2$, in particular their example has even $b_2^+$.

Recently, Baykur, Stipsicz, and Szab\'o established a general result: 
any  topological, closed, smoothable, oriented, non-spin, $4$-manifold with fundamental group $\Z_{4k+2}$ has infinitely many smooth structures with possibly seven exceptions for each $k$ \cite{BaykurStipsiczSzabo}. In this paper we extend their result to cover the fundamental group $\Z_{4k}$ with $k\geq 1$.
\begin{thm}\label{Thm1} Let $a,b\in \N$ with $a,b\geq 15$.  Then the smoothable closed, non-spin, topological, oriented $4$-manifold $Q$ with $\pi_1(Q) \simeq \Z_{4k}$, $b_2^+(Q)=a, b_2^-(Q)=b$, admits  infinitely many  non-diffeomorphic smooth structures.
\end{thm}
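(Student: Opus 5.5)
We follow the strategy of Baykur--Stipsicz--Szab\'o, adapting it to the order $4k$. By Hambleton--Kreck, closed oriented topological $4$-manifolds with finite cyclic fundamental group are classified up to homeomorphism by the intersection form on $H_2/\mathrm{Tors}$, the $w_2$-type and the Kirby--Siebenmann invariant. For $Q$ smoothable and non-spin, the relevant $w_2$-types are type I ($w_2(\widetilde Q)\neq 0$) and, since $|\pi_1|=4k$ is even, type III ($w_2(Q)\neq0$ but $w_2(\widetilde Q)=0$). The form is odd indefinite, so it is $a\langle 1\rangle\oplus b\langle -1\rangle$ in type I. In type III it is determined by rank, signature and parity. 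It therefore suffices to produce, for every pair $a,b\geq 15$ and each admissible $w_2$-type, a smooth $4$-manifold $X_{a,b}$ with $\pi_1=\Z_{4k}$ and those invariants, together with infinitely many pairwise non-diffeomorphic smooth manifolds homeomorphic to it.

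\textbf{Building blocks.} We start from a symplectic (or at least Seiberg--Witten basic) manifold $Z$ with $\pi_1(Z)=\Z_{4k}$ and small $b_2^\pm$, containing an embedded homologically essential torus $T$ of self-intersection zero with $\pi_1(Z\setminus T)\to\pi_1(Z)$ surjective, ideally with $T$ lying in a cusp or nucleus neighborhood. Candidates are a $\Z_{4k}$ quotient of an elliptic surface or of the Baykur--Hamada type Lefschetz fibrations (the macros $\BaykurHamada$, $\BaykurHamadaSurgered$ suggest this), or a Fintushel--Stern knot-surgery-ready cover construction. Knot surgery $Z_K$ along $T$ with knots $K$ of distinct Alexander polynomials leaves the homeomorphism type unchanged: $\pi_1$ is preserved because the meridian of $T$ is killed, and the intersection form and $w_2$-type are preserved. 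Meanwhile $SW_{Z_K}=SW_Z\cdot\Delta_K(t^2)$, which gives infinitely many smooth structures. For the universal cover we would use the induced surgery on $4k$ lifted tori, so that the Seiberg--Witten invariants of the covers also distinguish the structures if needed.

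\textbf{Filling the geography.} To reach all $(a,b)$ with $a,b\ge 15$, we take fiber sums or connected sums of $Z_K$ with simply connected manifolds. Blowing up ($\#\overline{\CP}^2$) raises $b$ and preserves nonvanishing SW, so it covers every $b$ beyond a threshold. Raising $a$ requires care, since $\#\CP^2$ kills SW. Instead we use fiber sums with elliptic surfaces $E(n)$ or Gompf/Luttinger-surgered pieces of controlled $(b_2^+,b_2^-)$, which preserve nonvanishing SW by the gluing formula. Parity constraints are handled by having a base family with $b_2^+$ of both parities: for $\pi_1=\Z_{4k}$ the constraint $b_1+b_2^+$ odd for symplectic manifolds forces some non-symplectic pieces. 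We handle even $b_2^+$ by a surgery that keeps SW nonzero via an adjunction-type or Fintushel--Stern rational blowdown/torus-surgery argument, as in the second theorem of the abstract. For type III we would use a base $Z$ whose universal cover is spin, for instance a free quotient of a spin elliptic-type manifold, and then use only operations that preserve the $w_2$-type.

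\textbf{Main obstacle.} The hardest step will be constructing the base manifolds with $\pi_1=\Z_{4k}$ and \emph{small} $b_2^\pm$ in both $w_2$-types, and especially with $b_2^+$ even. Free $\Z_{4k}$-actions must avoid the obstruction from the Rokhlin invariant and $\Z_2$-index that distinguishes $4k$ from $4k+2$. Our first attempt would be to take a $\Z_{4k}$-quotient of a Baykur--Hamada-type fibration with a free fiber-preserving action. We would then perform Luttinger surgeries to kill the extra fundamental group, verifying via explicit presentations that $\pi_1$ is exactly $\Z_{4k}$ and that a suitable torus survives for knot surgery. The threshold $15$ would come from the invariants of this base plus the smallest correction pieces.
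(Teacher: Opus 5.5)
There is a genuine gap, and it lies in the case that carries the content of the theorem. The cases with $\min(b_2^+,b_2^-)$ odd are the easy ones: the paper gets them from Torres' lemma applied to the Baykur--Hamada manifold, plus blow-ups and orientation reversal, and your symplectic-block-plus-knot-surgery sketch is roughly compatible with that.

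When $b_2^+(Q)$ is even, however, your approach cannot work. Since $b_1(Q)=0$, every Seiberg--Witten moduli space of $Q$ has expected dimension $d\equiv 1+b_1+b_2^+\pmod 2$, which is odd. So all SW invariants of every smooth structure on $Q$ vanish, and $Q$ admits no symplectic structure. Consequences:
\begin{itemize}
\item Your building block $Z$ (symplectic or SW-basic with $\pi_1=\Z_{4k}$) does not exist in this range, and $SW_{Z_K}=SW_Z\cdot\Delta_K(t^2)$ reads $0=0$.
\item The proposed \emph{surgery that keeps SW nonzero} for even $b_2^+$ is impossible. Theorem 2 does not do this either; it distinguishes its examples through their universal covers.
\item Your \emph{first attempt} has the same defect. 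A free $\Z_{4k}$-action on a Baykur--Hamada-type fibration preserving the fibration and its symplectic form has a symplectic quotient, hence odd $b_2^+$.
\end{itemize}

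What is missing is the actual construction on the universal cover. You need:
\begin{itemize}
\item a simply connected $\widetilde Q$ with $b_2^+(\widetilde Q)=4k(1+b_2^+(Q))-1$ odd and nonvanishing SW;
\item a free $\Z_{4k}$-action on it, which necessarily preserves no symplectic form, since otherwise the quotient would be symplectic;
\item an equivariant family of surgeries producing infinitely many distinct SW invariants upstairs.
\end{itemize}
The quotients are then distinguished by their universal covers and identified topologically via Hambleton--Kreck's Theorem C (non-spin universal cover).

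The paper builds this as follows.
\begin{itemize}
\item It glues $4k$ copies of $B\setminus\nu^\circ(S\cup T_1\cup T_2)$. It identifies $\partial\nu S^{(i)}$ with $\partial\nu S^{(i+2k)}$ via $(x,z)\mapsto(r(x),-z)$, where $r$ is a free orientation-reversing involution of $\Sigma_2$, so paired blocks carry opposite symplectic forms. It identifies $\partial\nu T_1^{(i)}$ with $\partial\nu T_2^{(i+1)}$. Then $\Z_{4k}$ permutes the blocks freely.
\item After blockwise Luttinger surgeries, $\pi_1$ is generated by block-crossing loops $\gamma_i$.
\item These are killed by equivariant $(1,p)$-surgeries on $T_{\gamma_i}=x_1\times\gamma_i$, which have Lagrangian duals $y_1\times a_7$.
\item The Morgan--Mrowka--Szab\'o formula, together with adjunction on the duals, makes the summed SW invariant a polynomial in $p$ with top coefficient $SW(\X,K)\neq 0$.
\end{itemize}
None of this appears in your proposal, so the even case is unproved.

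The geography is also simpler than you suggest. Signature-zero examples with $b_2^\pm=16+2h$ (obtained by raising the fiber genus), combined with equivariant blow-ups and orientation reversal, already cover every $(a,b)$ with $\min(a,b)$ even. No fiber sums with $E(n)$ are needed. Finally, your type III paragraph goes beyond what the paper proves, since the paper uses non-spin universal covers, and you give no argument for it.
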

Pairing this with 
\cite{BaykurStipsiczSzabo} gives a statement valid for $\pi_1(Q)\simeq \Z_{2k}$. In  \Cref{Thm1}, the cases where  $b_2^+$ or $b_2^-$ is odd follow from a result of \cite{Torres2} using \cite{BaykurHamada} as input.
 Thus, to prove the theorem, it suffices to consider the cases with $b_2^+$ even and signature zero,  all other cases follow by blowing up or changing the orientation.
Requiring  $b_2^+$ even considerably complicates the task. Indeed, it implies that the Seiberg-Witten invariants vanish as the associated moduli spaces have odd dimension.

This paper includes also results about finite, non-cyclic fundamental groups.
\begin{thm}\label{Thm2}  Let $G$ be a finite group. Then there exists a closed,  topological $4$-manifold $Q$ with $\pi_1(Q)\simeq \Z_2\times G$, $\sigma(Q) = 0$ and $b^+(Q)$ even carrying infinitely many non-diffeomorphic, irreducible, smooth structures. 
\end{thm}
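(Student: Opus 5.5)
Since $b_2^+(Q)$ is even, the ordinary Seiberg--Witten invariants of $Q$ vanish, so we will not try to distinguish smooth structures on $Q$ directly. Instead we pass to a covering where they do not vanish. The structure we aim for is $Q = Y/\iota$, where $Y$ is a smooth closed $4$-manifold with $\pi_1(Y)\simeq G$ and $b_2^+(Y)$ odd, and $\iota$ is a free, orientation-preserving involution. Then $\pi_1(Q)$ is an extension of $\Z_2$ by $G$, and we arrange that it is the product $\Z_2\times G$. Since the cover doubles Euler characteristic and signature, we get $\sigma(Q)=\sigma(Y)/2=0$ as soon as $\sigma(Y)=0$. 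We also have $b_2^+(Y) = b_2^+(Q) + \dim H^+_{-}(Y)$, where $H^+_-$ is the part of $H^+$ on which $\iota$ acts by $-1$. Choosing $Y$ so that $\iota$ acts on $H^+(Y)$ with an odd-dimensional $(-1)$-eigenspace makes $b_2^+(Q)$ even while $b_2^+(Y)$ is odd. Smooth structures on $Q$ are then distinguished by the Seiberg--Witten invariants of their double covers. A diffeomorphism $Q_n\cong Q_m$ lifts to an equivariant diffeomorphism of the covers $Y_n\cong Y_m$, because the index-two subgroup $G\subset \Z_2\times G$ is characteristic as the kernel of the projection to $\Z_2$. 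This needs care: when $G$ has $\Z_2$-quotients there are several index-two subgroups. In that case we will instead use the set of all double covers as the invariant, or compare the covers via $\pi_1$-data.

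The construction will be equivariant. Start with a free action of $G$ realized on a Baykur--Hamada type Lefschetz fibration or surface bundle $\BaykurHamada$, or more simply on a symplectic manifold with $\pi_1 = G$ in the spirit of Park and Torres. Take $Y$ to be a fiber sum (or double) of two copies of a symplectic $4$-manifold $X$ with $\pi_1(X)=G$, glued along a surface of self-intersection zero. Let $\iota$ swap the two copies while acting freely on the gluing region, for instance by the antipodal-type map on the circle direction combined with a fixed-point-free involution of the surface. Concretely, $Y = X\#_{\Sigma} \overline{X}'$ or $Y = (X\setminus\nu\Sigma)\cup_{\phi}(X\setminus \nu\Sigma)$ with $\iota$ exchanging the sides. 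Signature zero comes from gluing $X$ to a copy with the opposite sign contribution, or from using a manifold whose involution reverses the sign balance; this is the delicate point. To get exotic structures, perform Fintushel--Stern knot surgery, or torus surgeries with varying coefficients $1/n$, equivariantly: along a torus $T\subset X$ and simultaneously along $\iota(T)$. This yields $Y_n$ with free involutions and quotients $Q_n$ that are all homeomorphic to $Q$ by Hambleton--Kreck classification of $4$-manifolds with finite fundamental group. Irreducibility of $Q_n$ follows from irreducibility of the cover, which we get from symplectic minimality and nonvanishing SW invariants after ruling out $S^2$-bundle splittings.

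The main obstacle is making $Y_n$ have nontrivial, $n$-dependent Seiberg--Witten invariants while its quotient has $\sigma=0$ and even $b_2^+$. A symplectic $Y$ has $b_2^+$ odd, but it need not have $\sigma=0$. So $Y$ will likely be non-symplectic, for example a torus-surgered manifold obtained from symplectic pieces in the Baykur--Stipsicz--Szab\'o style, with the SW invariant computed via the gluing/surgery formula. The basic classes then grow with $n$, or the invariant multiplies by a Laurent polynomial such as $n$ times a unit, which distinguishes infinitely many $Y_n$ up to diffeomorphism. I would first copy the construction of \cite{BaykurStipsiczSzabo} for $\Z_2$ and take a product-type modification: replace the base manifold by one carrying a free $G$-action commuting with their involution, for instance by fiber summing with a surface bundle whose fiber surface group surjects onto $G$ and killing the extra generators with Luttinger surgeries. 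Finally, we verify the Hambleton--Kreck invariants (the type, which is non-spin, and the $w_2$-type) so that all $Q_n$ are homeomorphic.
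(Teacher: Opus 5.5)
Your outline (nonvanishing Seiberg--Witten invariants on a cover, equivariant knot or torus surgery, quotient, Hambleton--Kreck) matches the paper's. The gap is that the substance of the theorem, the construction of the cover, is missing. None of the candidates you sketch yields both $\pi_1(Q)\cong\Z_2\times G$ and $b^+(Q)$ even.

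\textbf{The doubles.} For $Y=(X\setminus\nu\Sigma)\cup_\phi(X\setminus\nu\Sigma)$ with $\pi_1(X)=G$, the group $\pi_1(Y)$ is an amalgam over the image of $\pi_1(\Sigma\times S^1)$, not $G$ in general. For example, if $\Sigma$ and its meridian are null-homotopic in the complement, then $\pi_1(Y)=G*G$ and $\pi_1(Q)=G*\Z_2$, which is infinite. Even when $\pi_1(Y)=G$, nothing you say forces the extension $1\to G\to\pi_1(Q)\to\Z_2\to 1$ to be a direct product.

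\textbf{Gluing to $\overline{X}$.} This does not help. The swap is then orientation-preserving only if $X$ has an orientation-reversing self-diffeomorphism. In any case, for an orientation-preserving swap Novikov additivity gives $\sigma(Y)=2\sigma(X)$, so you simply need $\sigma(X)=0$ from the start.

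\textbf{Your last suggestion.} Fiber summing with a surface bundle whose fiber group surjects onto $G$, then doing Luttinger surgeries, is not a construction. A surjection $\pi_1(\Sigma)\to G$ gives no free $G$-action. Cutting $\pi_1$ down to exactly $G$ would require realizing the relators of $G$ by torus surgeries, which you do not address.

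\textbf{Parity.} This is left open. With $b_1=0$ and $\sigma=0$ it is just $b^+(Q)=\chi(Q)/2-1$, a congruence on the Euler characteristic, but you give no mechanism to control it.

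\textbf{The obstacle you single out.} It is not real. Symplectic manifolds with $\sigma=0$ and $b^+$ odd are plentiful, e.g.\ $\mathcal{B}$, homeomorphic to $\#_{15}(\CP^2\#\overline{\CP}^2)$. Giving up symplecticity would cost you both $SW\neq 0$ via Taubes and the minimality-based irreducibility argument you want to use.

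The missing idea is to build the \emph{universal} cover directly as a simply-connected symplectic manifold with a free $\Z_2\times G$-action, killing all of its $\pi_1$ equivariantly. Then $\pi_1$ of the quotient is automatically $\Z_2\times G$. Since the universal cover is canonical, the index-two-subgroup issue also disappears; your pigeonhole over double covers would work too, but is unnecessary.

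\textbf{The paper's construction.}
\begin{itemize}
\item It takes a Lefschetz fibration over $\Sigma_4$ whose fiber $F_G$ is the connected sum of $|G|$ copies of the fiber of $L$ along the Cayley graph of $G$. The monodromy is $|G|$ copies of that of $L$, supported in disjoint subsurfaces. $G$ acts fiberwise, and $\Z_2$ acts by $r$ on the base and $\tau$ on the fiber.
\item It first performs, equivariantly, the Baykur--Stipsicz--Szab\'o Luttinger surgeries in each copy.
\item It then does Luttinger surgeries on tori $x_1\times\gamma_i$, $x_1\times\alpha_i$, $x_2\times\beta_i$ for loops supporting $\pi_1(S_G)$. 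Before this, equivariant stabilizations of the fiber make each $G$-orbit of loops disjoint and supply dual curves.
\item The base is enlarged to $\Sigma_4$ precisely so that the $r$-translates of these tori and of their dual tori remain disjoint.
\end{itemize}

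\textbf{The rest of the argument.}
\begin{itemize}
\item The result is simply connected, symplectic and minimal, hence irreducible, with $\sigma=0$.
\item Equivariant knot surgery multiplies its Seiberg--Witten series by a power of $\Delta_K$.
\item Infinitely many quotients are homeomorphic by Hambleton--Kreck \emph{finiteness} plus pigeonhole. The full classification you invoke is not available for arbitrary finite $G$.
\item Parity comes from computing $\chi(\X)/(4|G|)\equiv(1-\mathrm{genus}(S_G))/|G|\pmod 2$. If needed, one stabilizes once at a $\tau$-fixed point, which changes $\mathrm{genus}(S_G)$ by $|G|$.
\end{itemize}
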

The $\Z_2$ factor 
is due to the strategy we use to obtain an even $b_2^+$.
Note that the case  $\pi_1(Q)= \Z_2 \times G$ with $G$  a finite subgroup of $SO(4)$ follows also from \cite[Thm. 1.3]{BaykurStipsiczSzabo}. 

The approach used in the proof of \Cref{Thm1} and \Cref{Thm2} is similar
and is based on the strategy initiated by \cite{LevinePiccirillo2023new} and \cite{BaykurStipsiczSzabo}.
We begin by constructing a smooth manifold $X$ with a free $G$-action. Then, we perform $G$-equivariant surgeries along tori to kill the fundamental group of $X$ while retaining a free $G$-action. 
Performing more equivariant surgeries, we find an infinite family $\{\X_n\}_{n\in \N}$ of simply-connected manifolds with a free $G$-action which are pairwise non-diffeomorphic. 
Finally we construct our examples by taking quotients. 

In \cite{BaykurStipsiczSzabo}, the authors first construct exotic manifolds with fundamental group $\Z_2$, then,  by performing a circle sum operation with $\S^1\times Y^3$,  they obtain exotic manifolds with fundamental group $\Z_2\times G$ with $G = \pi_1(Y)$, $Y$ being a rational homology $3$-sphere. 
Their result on fundamental groups $\Z_{4k+2}$ then follows by taking  $G = \Z_{2k+1}$.

In contrast, our construction has a combinatorial structure that is specific to the fundamental group we want to realize.
This strategy, on one hand, allows us to obtain more fundamental groups but on the other hand, 
 it requires to perform additional surgeries that depend on the specific group under consideration.
 This introduces the extra difficulty 
 of finding a suitable link of surgery tori satisfying several properties and also accounts for the higher Euler characteristic of our manifolds compared to those in \cite{BaykurStipsiczSzabo}.


\subsection{Organization.} In \Cref{sec:Conventions} we discuss our conventions and review some background useful to the next chapters.
In \Cref{sec:ProofZ4} we prove \Cref{Thm1} for the case $k=1$ and in \Cref{sec:GroupZ4k} we explain the modifications needed to prove the general case $k>1$.
In \Cref{sec:Z2G} we prove \Cref{Thm2}.
\subsubsection*{Acknowledgements} 
The first author is thankful to Marco Marengon for inviting him to the R\'enyi institute where this project originated and to \.{I}nan\c{c}  Baykur and Rafael Torres  for bringing to his attention the references \cite{WelschingerLagrangianTori, ParkGeographyOfSymplectic, HoLiLuttingerSurgery}. The second author is thankful to Marco Marengon and  Andr\'as Stipsticz for guidance and encouragement. 
 The first author is grateful to Ruhr-Universit\"{a}t Bochum for financial support. The second author was supported by the grant EXCELLENCE-151337.

\section{Conventions and background}\label{sec:Conventions}
\subsection{Conventions and notation}
We work in the category of smooth, oriented, manifolds with boundary. When we say that two manifolds are different or distinct, we mean not diffeomorphic.
Homeomorphisms, diffeomorphisms and group actions are always assumed to preserve orientation unless stated otherwise.  

An exotic pair is a pair of manifolds $(X_0,X_1)$ which are homeomorphic but not diffeomorphic, we also say that $X_1$ is an exotic copy of $X_0$.
We denote by $\nu S$ a  tubular neighbourhood of a compact submanifold $S$ with compact fiber and by $\nu^\circ S  := \mathrm{int}(\nu S)$ its interior. 
Throughout the paper we will denote by $\Sigma_g$ a model closed Riemann surface of genus $g$. We use the notation $MCG(F,K)$ to denote the mapping class group of $F$ relative to the submanifold $K\subset F$.

For the sake of brevity, we will be  imprecise when talking about the fundamental group, often referring to unbased loops as generators. 
By this we mean that it is possible to fix  a basepoint $\ast$ and for each loop, an  arc joining said loop to $\ast$, so that the induced based loops generate the fundamental group.

We will denote the elements of the cyclic groups $\Z_k$, $k\in \N$
using bold face digits and  additive notation, e.g. $\mathbf{0}, \mathbf{1},\dots, \mathbf{k-1} \in \Z_k$.

\subsection{Remarks on surgery along tori}
 Let $X$ be a connected, compact $4$-manifold and let $T\subset X$ be an embedded $2$-torus. 
Suppose that the normal bundle of $T$ is trivial, $\nu T \simeq T \times \D^2$ and fix a basis $\{\lambda, \beta , [\partial \D^2]\} \in H_1(\partial \nu T)$.

\begin{definition}\label{def:SurgeryAlongToriXp}
We denote by $X_p$ any $4$-manifold obtained by
gluing  $\T^2\times \D^2$ to $ X \setminus \mathrm{int}(\nu T) $
 using a  diffeomorphism 
$\partial (\T^2\times \D^2)\to \partial \nu T$
which sends
\begin{equation}
    [\partial \D^2]\in H_1(\partial (\T^2\times \D^2))\mapsto [\lambda]+p[\partial \D^2] \in H_1(\partial  \nu T).
\end{equation}
\end{definition}
\begin{lem}\label{lem:MeridianTriviality} Suppose that there exists a closed surface of genus $g$, $\Sigma\subset X$ geometrically dual to $T$, with $g$ disjoint curves $y_1,\dots, y_g \subset \Sigma$, whose union does not separate $\Sigma$, 
which are trivial in $\pi_1(X\setminus T)$.
Then the meridians to $T$ are trivial in $\pi_1(X\setminus T)$.
\end{lem}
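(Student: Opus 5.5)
The approach is to write the meridian of $T$ as a product of commutators in the punctured surface $\Sigma\setminus\nu^\circ T$, and then show that each commutator dies in $\pi_1(X\setminus T)$.

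Since $\Sigma$ is geometrically dual to $T$, it meets $T$ transversally in a single point. After an isotopy we may assume that $\Sigma\cap \nu T$ is a normal disk fiber $\{pt\}\times \D^2$. Then $\Sigma' := \Sigma\setminus \nu^\circ T$ is a genus-$g$ surface with one boundary component, and this boundary is a meridian $\mu$ of $T$. The meridians of $T$ are all freely homotopic in $X\setminus T$, since $T$ is connected and $\partial \nu T\to T$ is a circle bundle. So it suffices to show that $\mu$ is trivial in $\pi_1(X\setminus T)$.

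Next we arrange the curves $y_i$ to lie in $\Sigma'$. The point $\Sigma\cap T$ can be chosen, or moved by an isotopy of $\Sigma$ supported near $T$, to avoid $y_1\cup\dots\cup y_g$. Then the $y_i$ lie in $\Sigma'$, and they are trivial in $\pi_1(X\setminus T)$ by hypothesis. The hypothesis is phrased in the complement of $T$, so it refers to the curves as they sit in $\Sigma'$.

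Now we choose a standard basis of $\pi_1(\Sigma')$ adapted to the $y_i$. The curves are disjoint, and their union does not separate $\Sigma$. By the classification of surfaces (the change of coordinates principle), the surface obtained by cutting $\Sigma$ along the $y_i$ is a sphere with $2g$ boundary circles. Hence there is a diffeomorphism of $\Sigma$ carrying $y_1,\dots,y_g$ to standard curves $b_1,\dots,b_g$ of a symplectic basis $a_1,b_1,\dots,a_g,b_g$. This diffeomorphism can be taken to fix a small disk away from the curves, namely the one containing $\Sigma\cap T$. Consequently $\pi_1(\Sigma')$ is free on based loops $\alpha_1,\beta_1,\dots,\alpha_g,\beta_g$, where each $\beta_i$ is freely homotopic in $\Sigma'$ to $y_i$, and
\begin{equation*}
\mu \simeq \prod_{i=1}^g [\alpha_i,\beta_i] .
\end{equation*}

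To finish, we push this relation into $X\setminus T$ using the inclusion $\Sigma'\subset X\setminus T$. Each $\beta_i$ is conjugate to $y_i$, which is trivial in $\pi_1(X\setminus T)$, so each $\beta_i$ is trivial there. Then every commutator $[\alpha_i,\beta_i]$ is trivial, and therefore $\mu$ is trivial in $\pi_1(X\setminus T)$.

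The main obstacle is the third step: producing a symplectic basis in which the $y_i$ appear as the $b_i$, compatibly with the puncture. Its input is exactly the non-separating hypothesis on $y_1\cup\dots\cup y_g$. The first attempt would be to cut along the $y_i$, observe that the result is a connected planar surface with $2g$ boundary circles and one puncture, and then choose arcs $a_i$ joining the two copies of $y_i$ disjointly.
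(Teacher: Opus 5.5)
Your proposal is correct and follows the paper's route. You write the meridian $\mu=\partial(\Sigma\setminus\nu^\circ T)$ as a product of $g$ commutators in the punctured surface, each involving a conjugate of one of the $y_i$, so $\mu$ dies in $\pi_1(X\setminus T)$; then you use that all meridians are conjugate. You also supply, via the change-of-coordinates principle, a justification the paper takes for granted: that a symplectic basis adapted to the non-separating family $y_1,\dots,y_g$ exists.
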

\begin{proof} Let $\mu =  \Sigma\cap \partial \nu T$ be a circle normal to $T$. Then $\mu$ can be expressed as a product of $g$   commutators (up to conjugation) in $\pi_1(\Sigma)$,   $\mu = c_1[x_1,y_1]c_1^{-1}\dots c_g[x_g,y_g]c_g^{-1}$ where $x_i, c_i \in \pi_1(\Sigma)$ for $i=1,\dots,g$. Therefore, since $y_i$ is trivial in $\pi_1(X)$ 
also $\mu$ is trivial in $\pi_1(X)$. 
Now recall that every two meridians of $T$ are conjugated in $\pi_1(X\setminus T)$. 
\end{proof}
\begin{lem}\label{lem:SurgeryKillingLongitude} Suppose that a meridian of $T$ is trivial in $\pi_1(X\setminus T)$.  Then for any $p\in \N$, 
    $$\pi_1(X_p)\simeq \frac{\pi_1(X)}{N(\lambda)},$$ 
    where $N(\lambda)$ denotes the normal closure of $\lambda$ in $\pi_1(X)$.
\end{lem}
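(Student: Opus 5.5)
The plan is to compute both fundamental groups with Seifert--van Kampen, using the complement $X^\circ := X\setminus \nu^\circ T$ as the common piece. The gluing torus $\partial \nu T\simeq \T^3$ is connected, so van Kampen applies directly.

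First, compute $\pi_1(X)$ from $X = X^\circ\cup \nu T$. Since $\pi_1(\nu T)=\pi_1(T)\simeq\Z^2$ is generated by $\lambda,\beta$, and $\pi_1(\partial\nu T)\simeq \Z^3$ is generated by $\lambda,\beta,\mu$ with $\mu=[\partial\D^2]$, we get
\[
\pi_1(X)\simeq \pi_1(X^\circ)/N(\mu).
\]
By hypothesis $\mu$ is already trivial in $\pi_1(X^\circ)=\pi_1(X\setminus T)$, so the inclusion induces an isomorphism $\pi_1(X^\circ)\simeq\pi_1(X)$. Under it, $\lambda,\beta$ correspond to their images in $X$; here I take $\lambda,\beta$ as loops on $\partial\nu T$, pushed off $T$.

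Second, compute $\pi_1(X_p)$ from $X_p = X^\circ\cup_\varphi (\T^2\times\D^2)$. The group $\pi_1(\T^2\times\D^2)\simeq\Z^2$ is the quotient of $\pi_1(\partial(\T^2\times \D^2))\simeq\Z^3$ by the class of $\partial\D^2$. Van Kampen therefore gives
\[
\pi_1(X_p)\simeq \pi_1(X^\circ)/N(\varphi_*[\partial\D^2]) = \pi_1(X^\circ)/N(\lambda\mu^p).
\]
The only point needing care is that the homological condition in \Cref{def:SurgeryAlongToriXp} determines the relevant element of $\pi_1$. This holds because $\pi_1(\partial\nu T)\simeq\Z^3$ is abelian, so homology classes are the same as homotopy classes on the torus. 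The image of $\partial\D^2$ is then the element $\lambda\mu^p$, up to conjugation when we pass into $\pi_1(X^\circ)$, and conjugation does not change the normal closure.

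Third, combine the two steps. Since $\mu=1$ in $\pi_1(X^\circ)$, we have $N(\lambda\mu^p)=N(\lambda)$, and so
\[
\pi_1(X_p)\simeq \pi_1(X^\circ)/N(\lambda)\simeq \pi_1(X)/N(\lambda),
\]
where the second isomorphism is the identification from the first step.

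There is no genuine obstacle. The one point to state carefully is that the triviality of some meridian implies the triviality of the specific meridian $\mu=[\partial\D^2]$ used in the basis. This follows because all meridians of $T$ are conjugate in $\pi_1(X\setminus T)$, as noted in the proof of \Cref{lem:MeridianTriviality}. The result is independent of $p$, as claimed.
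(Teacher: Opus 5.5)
Your proposal is correct and follows the paper's proof. Both apply Seifert--van Kampen to $X\setminus\nu^\circ T$ glued to $\T^2\times\D^2$ to get $\pi_1(X_p)\simeq\pi_1(X\setminus T)/N(\lambda\mu^p)$, then use $\mu=1$ to replace $\lambda\mu^p$ by $\lambda$ and to identify $\pi_1(X\setminus T)$ with $\pi_1(X)$; you just make explicit the van Kampen step $\pi_1(X)\simeq\pi_1(X\setminus\nu^\circ T)/N(\mu)$ that the paper leaves implicit.
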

\begin{proof} Choose as generators of $\pi_1(\partial(\T^2\times \D^2)) = \pi_1 (\T^3)$ the three $\S^1$ factors. Then applying Seifert-Van Kampen we obtain:
$$\pi_1(X_p) \simeq \frac{\pi_1(X\setminus T)}{N(\mu^p\lambda)}.$$
Now since $\mu$ is null-homotopic in $\pi_1(X\setminus T)$ we have that $\mu^p\lambda \sim \lambda$ and  $\pi_1(X\setminus T)\simeq \pi_1(X)$ from which the claim follows. \end{proof}

\subsection{Review of the Baykur-Hamada manifold.}\label{sub:BaykurHamadaManifolds} 
Baykur-Hamada defined in \cite{BaykurHamada} a  Lefschetz fibration $\BaykurHamada\to \Sigma_2$  which will be crucial to our construction. We now review $B$, noting the following properties:
\begin{enumerate}
    \item  $B$  has signature zero: $\sigma(B) = 0$,
    \item the generic fiber $F$ has genus $g = 8$ (in \cite{BaykurHamada} $g$ can be taken to be any number $\geq 5$ and we choose $g=8$),
    \item  it has $4$ vanishing circles, 
    \item it has a section $S: \Sigma_2\to \BaykurHamada$ of self intersection $S\cdot S = 0$, consequently $B$ admits a symplectic structure.
    \item $\pi_1(B)$ is generated by loops in the fiber and in the base of the fibration. Specifically, we denote $x_1, y_1$ the loops in the base on the left, $x_2, y_2$ the loops on the right, 
    $a_i, b_i$ for $1\leq i\leq 8$ the loops in the fiber $F$. These form a symplectic base of the first homology of the base and of the fiber, respectively.   The structure of the fibration is described by \Cref{Pic:BaykurHamada}.
    \begin{figure}
        \centering
        \includegraphics[width=0.4\linewidth]{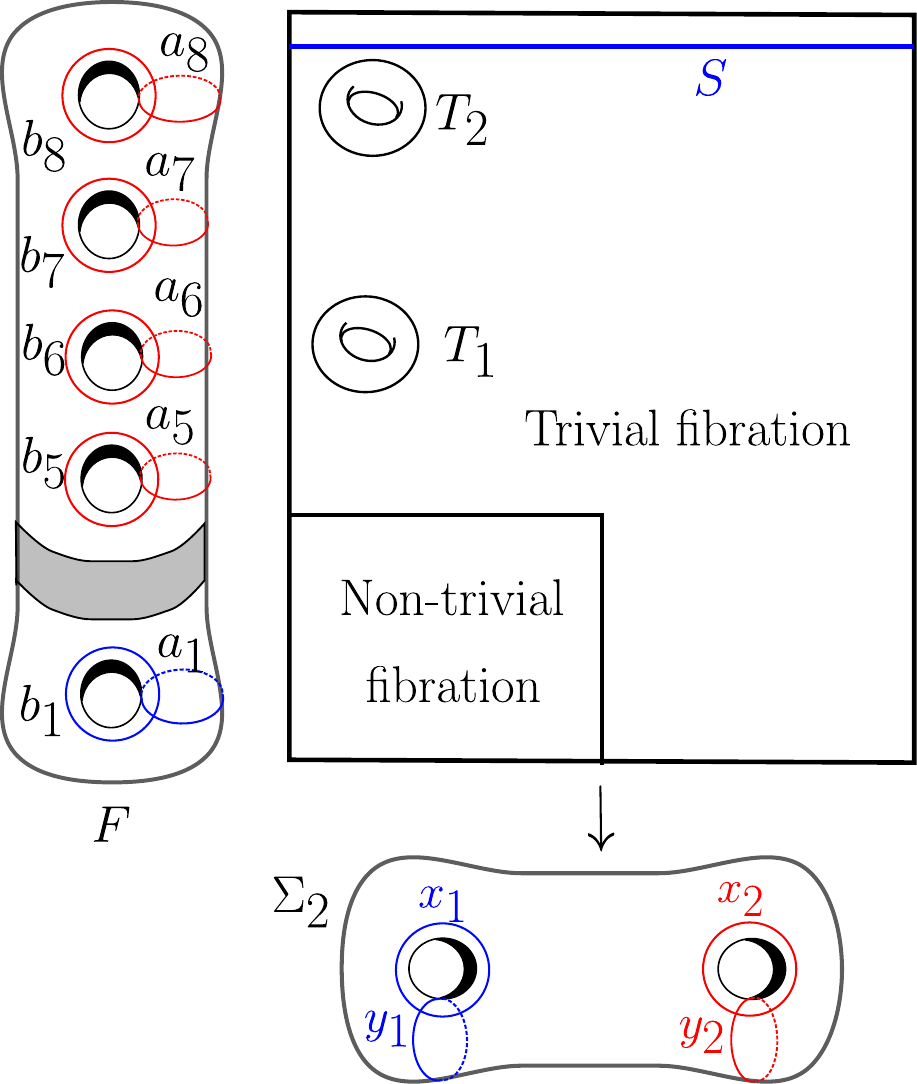}
        \caption{Schematic description of $B\to \Sigma_2$ with generators of $H_1(\Sigma_2)$ and $H_1(F)$, the section $S$ and the tori $T_1, T_2$.}
        \label{Pic:BaykurHamada}
    \end{figure}
        
    \item $B$ is not simply connected, however the fibration possesses certain Lagrangian tori: \begin{align}\label{eq:SurgeryToriBaykurHamada}
        & x_2\times b_1  & & y_2 \times b_2  & & y_2\times a_i  & & y_2 \times b_i \\
        \notag  & x_1 \times a_5 & & y_1 \times a_5 & & b_1\times y_2 &  & b_2\times x_2 
    \end{align}
    where $i=3,\dots 8$,
    such that performing Luttinger surgery on them yields a symplectic and simply-connected manifold $\BaykurHamadaSurgered$.
    More precisely, $\pi_1(\BaykurHamadaSurgered)$ is still generated by the loops on the fiber and the base but now they bound disks. 
    Moreover the section survives in $\BaykurHamadaSurgered$ and has simply-connected complement. 
    Notice that $y_2\times b_2 \cap y_2\times a_2 \neq \emptyset$, but they can be made disjoint by considering a parallel pushoff of $y_2$ on $\Sigma_2$.

    \item There are two extra Lagrangian tori in $\BaykurHamadaSurgered$:
    \begin{align*}
        T_1 := x_1\times a_6  & & T_2 :=x_1 \times a_8
    \end{align*}
     with $\pi_1(\mathcal{B}-(S\cup T_1 \cup T_2))=1$ (for an explicit proof of this last fact, see \cite[Corollary 3.7]{tagliente2026exotic}).
    \item\label{item:Non_SpinBaykurHamada} There is a surface with odd self-intersection lying in the complement of all the surgery tori and the section. In particular $\BaykurHamadaSurgered$ is non-spin.

\end{enumerate}

\section{Manifolds with fundamental group $\Z_{4}$}\label{sec:ProofZ4}
In this section we prove \Cref{Thm1} in the case $k=1$.
\subsection{The manifold $X$} Firstly we will construct a closed symplectic $4$-manifold $X$ with a free $\Z_{4}$-action. 

\newcommand{\ii}{{(i)}}
Consider four disjoint copies of the Baykur-Hamada fibration: $B^\ii := B$, $i=1,\dots, 4$.

We denote the copy of the section $S$ in $B^\ii$ as $S^\ii$
and the copies of the tori $T_1,T_2$ as $T_1^\ii, T_2^\ii\subset B^\ii$.

We construct $X$ as follows. For each $i$ we  remove from $B^\ii$ the interior of the tubular neighbourhoods (the identifications are the same for each $i$)
\begin{align*}
    \nu S^\ii \simeq S^\ii \times \D^2 & &  \nu T_1^\ii \simeq T_1^\ii \times \D^2  & & \nu T_2^\ii \simeq T_2^\ii \times \D^2.
\end{align*}

Then, under the above identifications, we glue together 
\begin{equation}\label{eq:GluingMapSection1}
\begin{split}
\partial \nu S^{(1)}   =  S^{(1)} &\times \S^1 \to S^{(3)} \times \S^1 \simeq \partial \nu S^{(3)} \\
& (x, z)\mapsto (r(x),-z) \\
\end{split}
\end{equation}
where $r:\Sigma_2\to \Sigma_2$ is a free orientation reversing involution preserving the curve $x_1$.
Similarly we glue 
\begin{equation}\label{eq:GluingMapSection2}
\begin{split}
\partial \nu S^{(2)}   =  S^{(2)} &\times \S^1 \to S^{(4)} \times \S^1 \simeq \partial \nu S^{(4)} \\
& (x, z)\mapsto (r(x),-z). 
\end{split}
\end{equation}

Next, we glue 
for $i=1,.., 4$
\begin{equation}\label{eq:GluingMapTori}
\begin{split}
\partial \nu T_1^{(i)}   =  T_1^{(i)} &\times \S^1 \to T_2^{(i+1)} \times \S^1 \simeq \partial \nu T_2^{(i+1)} \\
& (x, z)\mapsto (x,\bar{z}),
\end{split}
\end{equation}
where the superscript is interpreted modulo $ 4$.
We call $X$ the result and we orient $X$ with the orientation induced by the orientation of the blocks $B^\ii$.

\begin{lem}
$X$ admits a symplectic structure.
\end{lem}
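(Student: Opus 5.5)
The plan is to realize $X$ as an iterated symplectic normal connected sum (Gompf's fiber sum) of the symplectic manifolds $B^{(i)}$, and then invoke Gompf's theorem \cite{Gompf} (equivalently McCarthy--Wolfson): if $\Sigma_1\subset M_1$ and $\Sigma_2\subset M_2$ are symplectic surfaces of the same genus with $[\Sigma_1]^2=-[\Sigma_2]^2$, and there is an orientation-reversing bundle isomorphism of the normal bundles covering a symplectomorphism $\Sigma_1\to\Sigma_2$ (up to isotopy), then the glued manifold carries a symplectic structure. The same holds when both surfaces lie in one manifold, so the self-gluings needed to close up the cycle $T_1^{(i)}\to T_2^{(i+1)}$ are also covered. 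The gluings \eqref{eq:GluingMapSection1}, \eqref{eq:GluingMapSection2}, \eqref{eq:GluingMapTori} will be done one at a time, checking the hypotheses at each stage.

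First, every submanifold to be glued must be symplectic. The sections $S^{(i)}$ are symplectic with trivial normal bundle, by property (4). The tori $T_1,T_2$ are Lagrangian in $\mathcal{B}$ with trivial normal bundle. Since they are disjoint from $S$ and from each other, and their homology classes $[x_1\times a_6]$ and $[x_1\times a_8]$ are nonzero (they pair with the dual tori $y_1\times b_6$ and $y_1\times b_8$), Gompf's perturbation lemma applies. It deforms the symplectic form in a neighbourhood of a collection of disjoint Lagrangian tori so that these tori become symplectic, without changing the form near $S$. The perturbing closed $2$-form must be chosen to restrict to a positive area form on each torus; a class $\eta$ dual to the torus homology class works, which is possible because the classes are linearly independent. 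Here one should note that the blocks $B^{(i)}$ in the construction of $X$ are the surgered copies $\mathcal{B}$, since that is where $T_1,T_2$ are defined. After the perturbation, all of $S^{(i)}, T_1^{(i)}, T_2^{(i)}$ are disjoint symplectic surfaces with self-intersection $0$.

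Next, the gluing maps must satisfy the hypotheses. For the tori, the map is $(x,z)\mapsto(x,\bar z)$. It is the identity on the torus, hence a symplectomorphism after scaling the forms so that $T_1^{(i)}$ and $T_2^{(i+1)}$ have equal area, which is allowed because all four blocks carry identical forms. On the normal fiber it is conjugation, which is orientation-reversing, exactly as the fiber sum requires. For the sections, the map is $(x,z)\mapsto(r(x),-z)$. Here $r$ is orientation-reversing on $\Sigma_2$ and $z\mapsto -z$ preserves the orientation of $\S^1$, so the total map reverses the orientation of $\partial\nu S$, as needed for the glued manifold to be oriented compatibly with the blocks. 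The remaining issue is that Gompf's theorem wants an orientation-preserving symplectomorphism between the surfaces. I would handle this by giving $S^{(3)}$ and $S^{(4)}$ the opposite orientation; this makes $r$ orientation-preserving, but $S^{(3)}$ is then anti-symplectic rather than symplectic.

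This orientation issue is the main obstacle. My preferred fix is to realize the gluing as the fiber sum of the section $S^{(1)}$ with $r(S^{(3)})$, using the reflection $z\mapsto\bar z$ composed with a rotation. Concretely, I would rewrite $(x,z)\mapsto(r(x),-z)$ as the composition of $x\mapsto r(x)$ with an orientation-reversing fiber map. Up to isotopy, the identification $\nu S\simeq S\times\D^2$ can be changed by $z\mapsto\bar z$ on one side, which converts the map into the standard form $(x,z)\mapsto(\phi(x),\bar z)$ with $\phi$ an orientation-preserving diffeomorphism. Since $\phi$ is isotopic to a symplectomorphism by Moser's theorem (equal areas), Gompf's theorem applies. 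If this bookkeeping fails, I would instead argue directly: construct the symplectic form on the collar $\Sigma_2\times[-1,1]\times\S^1$ by hand, using Gompf's model. Equivalently, one can observe that for trivial normal bundles only the orientation reversal of the full boundary gluing matters, together with an isotopy of the base map to a symplectomorphism. Finally, I would check that the result is closed and connected; it is connected because the torus gluings link the blocks in a cycle.
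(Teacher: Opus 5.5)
Your overall route matches the paper's: Gompf symplectic sums along the sections and along $T_1,T_2$, with the tori made symplectic by perturbing the form, using that they are Lagrangian and nonzero in $H_2(\,\cdot\,;\R)$. The paper simply cites Gompf's Cor.~1.7, which packages this. One small misreading: the blocks of $X$ are the unsurgered $B$, and the Luttinger surgeries come afterwards to produce $\X$. Your argument still works verbatim in $B$, since $T_1,T_2$ are products of a base curve and a fiber curve there too.

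The gap is at the section gluings. You rightly call this the main obstacle, but you do not resolve it. Precomposing with $z\mapsto\bar z$ on the normal disk only changes the normal component of $(x,z)\mapsto(r(x),-z)$. The base component is still $r$, which reverses the symplectic orientations of $S^{(1)}$ and $S^{(3)}$; both are sections, hence oriented like $\Sigma_2$. So no orientation-preserving $\phi$ appears, and $r$ is not isotopic to any symplectomorphism.

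Filling the collar by hand cannot work either while keeping $\omega$ on both blocks. Take a pushoff $\Sigma_2\times\{z_0\}\subset\partial\nu S^{(1)}$. From the $B^{(1)}$ side it has $\omega$-area $\langle[\omega],[S]\rangle>0$. The gluing identifies it with a pushoff of $S^{(3)}$ with reversed orientation, whose $\omega$-area is $-\langle[\omega],[S]\rangle<0$. Hence no closed form on $X$ can agree with $\omega$ near both sides of $\partial\nu S^{(1)}$.

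The missing idea is the paper's: equip $B^{(3)}$ and $B^{(4)}$ with $-\omega$. In dimension four $(-\omega)\wedge(-\omega)=\omega\wedge\omega$, so the orientation of these blocks, and hence of $X$, is unchanged. But $S^{(3)},S^{(4)}$ are now symplectic with the reversed orientation, i.e.\ $r\colon S\to S^{(3)}$ is a symplectic embedding, and their normal orientations are reversed as well. Then $(x,z)\mapsto(r(x),-z)$ covers an orientation-preserving, area-preserving map of symplectic surfaces (symplectic after a Moser isotopy) by an orientation-reversing normal map, exactly as the sum requires.

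You were one step from this when you noted that reversing the orientation of $S^{(3)}$ makes it anti-symplectic: anti-symplectic for $\omega$ is symplectic for $-\omega$. This choice is compatible with the torus gluings, including those joining a $+\omega$ block to a $-\omega$ block ($2\to3$ and $4\to1$). The reason is that $T_1,T_2$ are Lagrangian for $\pm\omega$ alike, so the perturbation can make them symplectic with whatever orientations the identity gluing requires.
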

\begin{proof} Equip  $B^\ii$ with the symplectic structure given by the Lefschetz fibration for $i = 1,2$, and with the opposite of such symplectic structure for  $i=3,4$. Now the embedding $id: S^\ii\to B^\ii $ for $i=1,2$ and $r:S^\ii\to B^\ii$ for $i=3,4$ are symplectic because $S$ is the section of the Lefschetz fibration.
For any $i$, the tori $T_1^\ii$ and $T_2^\ii$  are Lagrangians in $B^\ii$ because are products of a curve on the fiber and a curve on the base. 

All the surfaces involved in the gluing have trivial normal bundle and the tori have a geometric dual hence are non-trivial in $H_2(B^\ii;\R)$.
Thus we can invoke \cite[Cor. 1.7]{GompfNewConstructions} to give $X$ a symplectic structure.\end{proof}

We define a free action of $\Z_{4}$ on $\bigsqcup_i B^\ii$ by sending each block $B^\ii$ to $B^{(i+1)}$  ciclically for $i=1,\dots, 4$ as the identity. It is not difficult to check that this action induces an action on $X$ since it passes to the quotient by the identifications we used above.
Moreover, since  $r:\Sigma_2\to \Sigma_2$ is a free involution, $\Z_{4}$ acts freely on $X$.
 
\subsection{The manifolds $\X_p$} The manifold $X$ has a free $\Z_{4}$-action but is not simply-connected. We will now construct, by performing surgeries on $X$,  manifolds $\X_p$, which are simply-connected, carry a free $\Z_{4}$-action and have non-trivial Seiberg-Witten invariants.

\subsection{}\label{subsec:ConstructionXhi} We begin by performing on each block $B^\ii$ Luttinger surgeries as done in \cite{BaykurHamada}, more precisely we use in each block the tori \eqref{eq:SurgeryToriBaykurHamada}. 
We denote the resulting manifold as $\X$. 
Since each block is surgered in the same way, the free $\Z_{4}$-action on $X$ induces a free $\Z_{4}$-action on $\X$.
Moreover $\X$ is symplectic because the surgered tori are Lagrangians and we performed Luttinger surgeries \cite{luttinger1995lagrangian}. 

$\X$ can be thought as taking four copies of $\BaykurHamadaSurgered$ and performing  identifications analogous to those of $X$. Hence, since $\pi_1(\BaykurHamadaSurgered) = 1$, non-trivial  loops in $\pi_1(\X)$ are due to blocks' identification.
Indeed, we can take as generators of $\pi_1(\X)$, curves $\gamma_1,\gamma_2, \gamma_3, \gamma_4 \subset X$ where $\gamma_i$ is a curve crossing $B^\ii, B^{(i+1)}$, and  $B^{(i+2)}$, and, in addition, $\gamma_{i+1} = \mathbf{1} \cdot  \gamma_i$ where $\mathbf{1}\in \Z_{4}$ is the generator. Actually $\gamma_4$ is redundant as a generator but we need to consider it in order to perform our surgeries equivariantly. We will describe these curves precisely later in \Cref{ssec:ToriGamma}, for a schematic picture see \Cref{Fig:Gammas}(Right).

 \begin{figure}
     \centering
     \includegraphics[scale=0.25]{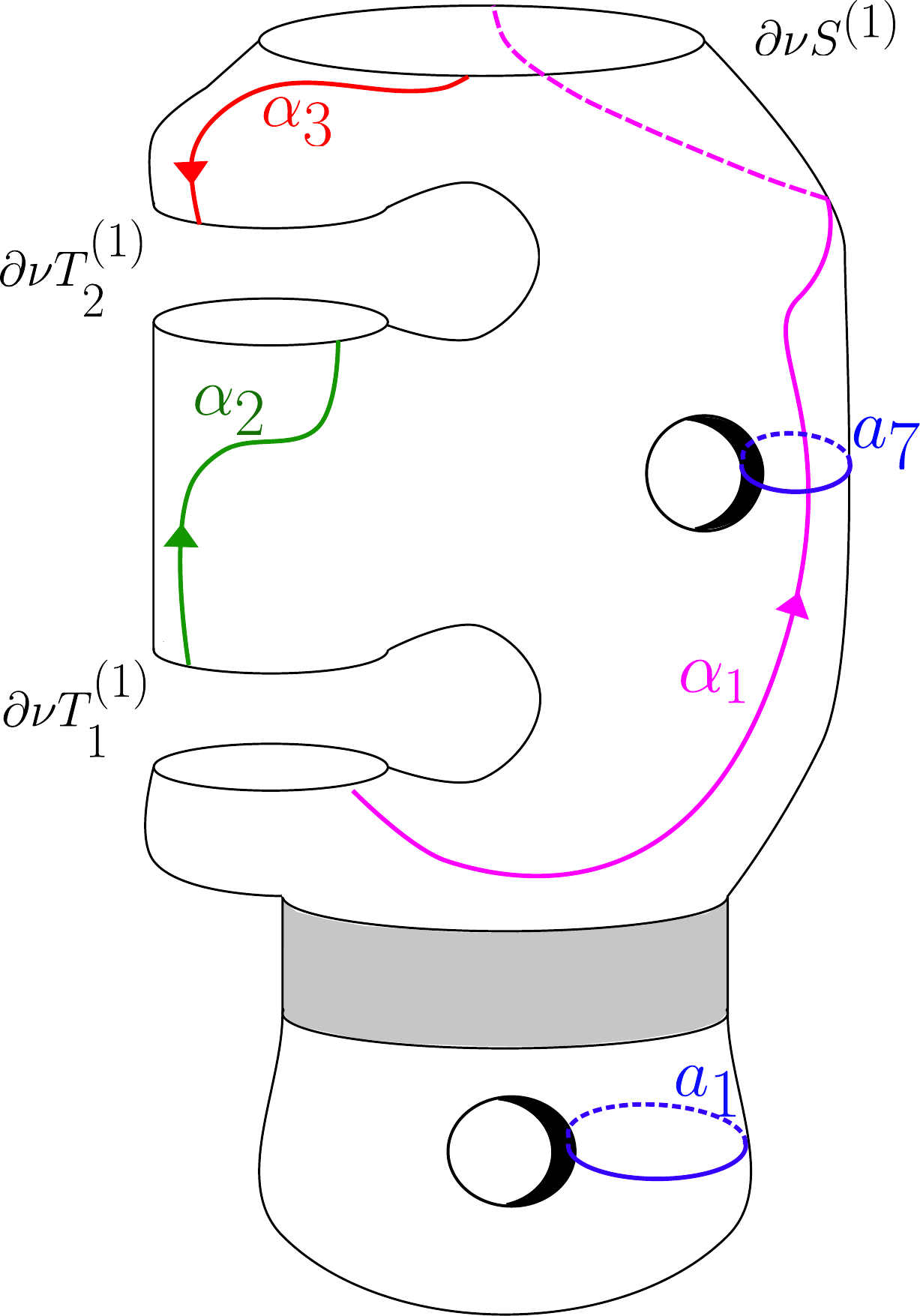}\hspace{1cm}
     \raisebox{1.5cm}{\includegraphics[scale=0.35]{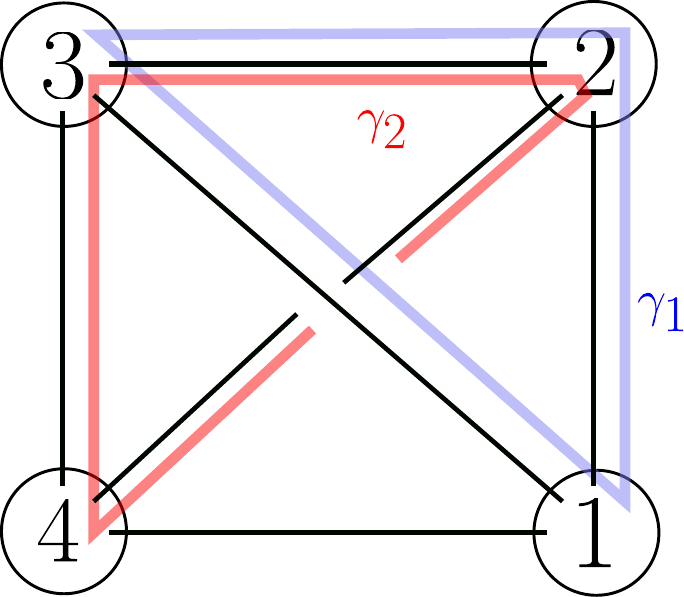}}
    \caption{\label{Fig:Gammas}Left: arcs $\alpha_i = \gamma_{2-i} \cap B^{(1)}$ lying on the  fiber of $B^{(1)}\setminus \nu^\circ (S^{(1)}\cup T_1^{(1)}\cup T_2^{(1)})$.
      Right: Schematic picture of $X$ and the loops $\gamma_1,\gamma_2$. Internal edges represent identification of sections, external edges identification of tori.} 
 \end{figure}

  In \Cref{ssec:ToriGamma} we will define disjoint tori $T_{\gamma_i} \subset \X$ that satisfy the following:
\begin{prop}\label{prop:PropertiesOfToriGamma_i} There exist disjoint tori $T_{\gamma_1}, \dots, T_{\gamma_4}$ in $\X$ with the following properties:
\begin{enumerate}
    \item \label{p_longitude}$\gamma_i\subset T_{\gamma_i}$ is a longitude.
    \item \label{p_selfIntersection}$T_{\gamma_i}$ has zero self-intersection for each $i$.
     \item \label{p_equivariant} $T_{\gamma_{i+1}} = \mathbf{1}\cdot T_{\gamma_i} $ for each $i$. 
    \item \label{p_dual} Each $T_{\gamma_i}$ has a geometric dual torus $T_{\gamma_i}^*$ such that $T_{\gamma_i}^*\cap T_{\gamma_j}=\emptyset$ for $i\neq j$.
    \item \label{p_generatorNullHmtpic} A generator of $\pi_1(T_{\gamma_i}^*)$ is null-homotopic in $\X \setminus \bigsqcup_{i=1}^4 \mathrm{int}(\nu T_{\gamma_i}))$.
    \item \label{p_Lagrangian} Each dual torus $T_{\gamma_i}^*$ is Lagrangian, and in particular they have zero self-intersection.
\end{enumerate}
\end{prop}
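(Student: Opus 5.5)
The idea is to build the tori $T_{\gamma_i}$ concretely from the gluing structure of $\X$. Then each property is checked locally: first in one block, then propagated by the $\Z_4$-action. I first construct $T_{\gamma_1}$ only and set $T_{\gamma_{i+1}}:=\mathbf{1}^{i}\cdot T_{\gamma_1}$. This gives \eqref{p_equivariant} by definition, reduces \eqref{p_longitude}, \eqref{p_selfIntersection}, \eqref{p_dual} and \eqref{p_Lagrangian} to statements about $T_{\gamma_1}$, and leaves disjointness of the translates as a separate check.

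The curve $\gamma_1$ passes through $B^{(1)},B^{(2)},B^{(3)}$, crossing the glued boundaries $\partial\nu T_1^{(1)}\cong\partial\nu T_2^{(2)}$ and $\partial\nu S^{(2)}\cong\partial\nu S^{(4)}$-type interfaces. In each block, $\gamma_1\cap B^{(i)}$ is an arc $\alpha$ lying in a fiber $F\setminus\nu^\circ(S\cup T_1\cup T_2)$, as in \Cref{Fig:Gammas}. I thicken $\gamma_1$ to a torus $\gamma_1\times c$, where $c$ is a small circle chosen as follows:
\begin{itemize}
\item in the pieces $T_j^{(i)}\times[0,\epsilon]$ of the gluing regions, $c$ is a factor circle of the glued torus (say $x_1$);
\item inside each block, $c$ is a parallel pushoff of $x_1$ in the base $\Sigma_2$, transported along the arc $\alpha$ in the fiber direction.
\end{itemize}
The result is a product of a fiber arc and a base curve in each block, hence Lagrangian and with trivial normal bundle. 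This gives \eqref{p_longitude} and \eqref{p_selfIntersection}. Across the section gluing, $r$ preserves $x_1$ and reverses orientation; this is compatible with the opposite symplectic forms on $B^{(3)},B^{(4)}$, so the pieces match up. Disjointness of the four translates is arranged by choosing the arcs $\alpha_0,\alpha_1$ in $B^{(1)}$ (the two arcs of different $\gamma$'s meeting that block) disjoint, and the base circles as distinct parallel pushoffs.

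For \eqref{p_dual} and \eqref{p_Lagrangian}, I take $T_{\gamma_1}^*$ to be a Lagrangian torus in a single block $B^{(2)}$ of the form $a\times y$. Here $a$ is a fiber curve meeting the arc $\alpha$ once, and $y$ is a base curve meeting the pushoff of $x_1$ once, e.g. $y_1$. Then $T^*_{\gamma_1}\cdot T_{\gamma_1}=1$ geometrically. The curves $a$ and $y$ are chosen away from the arcs belonging to other $\gamma_j$, so $T_{\gamma_1}^*\cap T_{\gamma_j}=\emptyset$ for $j\ne1$. They are also chosen away from the Luttinger surgery tori \eqref{eq:SurgeryToriBaykurHamada}, using pushoffs as in the discussion of $y_2\times b_2$.

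For \eqref{p_generatorNullHmtpic}, I use that $\pi_1(\mathcal{B}-(S\cup T_1\cup T_2))=1$ (\cite[Corollary 3.7]{tagliente2026exotic}). The generator $y$ (or $a$) of $\pi_1(T^*_{\gamma_1})$ lies in one block and is disjoint from all $T_{\gamma_j}$ after a small pushoff. It should bound a disk in $B^{(2)}$ minus the surgery loci. The issue is that this disk may hit the arcs of the $T_{\gamma_j}$. Removing $T_{\gamma_j}$ adds meridians, so I must show those meridians are trivial in the complement.

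That meridian step is the main obstacle. My plan there is to apply \Cref{lem:MeridianTriviality} with $\Sigma=T_{\gamma_j}^*$ (genus $1$), using the other generator as $y_1$. Since this argument is circular, I would instead pick the disk for the chosen generator to avoid the thin region near the arcs. Concretely, I would choose the generator to be $y$ (a base curve), which in $\mathcal{B}$ bounds a disk built from the Luttinger surgeries in the base direction. Such a disk can be isotoped off a neighbourhood of fiber arcs not lying over it. I expect the delicate verification to be checking this disjointness in the explicit model.
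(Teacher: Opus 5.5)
Your construction matches the paper's. $T_{\gamma_i}$ is (isotopic to) $x_1\times\gamma_i$, assembled from cylinders over the arcs $\gamma_i\cap B^{(j)}$. $T^*_{\gamma_i}$ is a product of a base loop and a fibre loop inside one block; the paper uses $y_1\times a_7$. So (1), (3), (4) and (6) go through as in the paper. One correction: each block is met by three arcs $\alpha_1,\alpha_2,\alpha_3$, belonging to three different $\gamma_j$, not two.

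The genuine gap is (5), where your text stops at a plan. You are right on two points. First, $\pi_1(\mathcal{B}\setminus(S\cup T_1\cup T_2))=1$ only yields a null-homotopy that may cross the cylinders $x_1\times\alpha$. Second, Lemma~\ref{lem:MeridianTriviality} cannot be invoked, since (5) is exactly its hypothesis. But your remedy, a disk ``in the base direction'' that can be isotoped off fibres not lying over it, is not what the Baykur--Hamada computation provides. The loop $y_1$ itself passes over $x_1$, and it dies in $\mathcal{B}$ only through a chain of relations. Each Luttinger surgery expresses its surgery curve as a commutator of pushoffs of the two dual curves; for $x_1\times a_5$ and $y_1\times a_5$ these are pushoffs of $y_1$, $x_1$ and $b_5$. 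The chain closes only after combining these with the remaining surgery (and monodromy) relations. So the relevant meridian disks, commutator tori and basing arcs live near the fibres over $x_1$, exactly where the $T_{\gamma_j}$ sit. What must be checked is that each can be chosen disjoint from $\bigcup_j T_{\gamma_j}$; for instance, commutator tori over pushoffs of $x_1$ distinct from your base curves, and fibre loops kept off the arcs. The paper treats (5) as immediate: the Baykur--Hamada surgeries kill every base loop, applied to the base generator of $T^*_{\gamma_i}$. That is the generator you chose, so you name the right ingredient, but the proposal does not supply this step.

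A secondary, fixable flaw is in (2): you derive it from $T_{\gamma_1}$ being Lagrangian, which is not justified. The torus runs through the gluing regions. There the form comes from Gompf's construction, after perturbing so that $T_1,T_2$ become symplectic, not from the fibration. With your interpolation between $x_1$ and its pushoffs, the torus is not even a product inside the blocks. The paper never claims $T_{\gamma_i}$ is Lagrangian; it proves (2) topologically. It pushes $\gamma_1$ off inside the fibre to $\tilde\gamma_1$. Then $x_1\times\tilde\gamma_1$ meets $T_{\gamma_1}$ only in the non-transverse circles $x_1\times(\tilde\gamma_1\cap\gamma_1)$, which can be perturbed away. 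This care is needed because a pushoff of $x_1$ in the base does not close up. Each of the three interfaces crossed by $\gamma_1$ moves the pushoff to the other side of $x_1$. At the section gluing this happens because the free orientation-reversing involution $r$ preserves $x_1$, hence reverses its co-orientation. At the two torus gluings it happens because they act as $(t_1,t_2)\mapsto(-t_1,t_2)$.
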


\subsection{}\label{ssec:ConstructionOfX_p} Consider  $\Z_{4}$-equivariant trivializations of the tubular neighbourhoods $\nu T_{\gamma_i}\simeq \T^2\times \D^2$  and fix $\Z_{4}$-equivariant  basis $\{[\gamma_i], \beta_i, [\partial \D^2]\} \in H_1(\partial (\X \setminus \nu^\circ T_{\gamma_i}))$. 

Let $\theta = (\theta_1,\dots, \theta_4) \in (\Z^2)^4$ be a multindex. We denote by $\X_\theta$  a $4$-manifold   obtained by gluing 
 $\bigsqcup_{i=1}^4 \T^2\times \D^2$ to $\X \setminus \bigsqcup_{i=1}^4 \nu^\circ T_{\gamma_i} $ using as gluing diffeomorphism between the   $i$-th boundary components  $i =1,\dots, 4$  a map 
$\partial (\T^2\times \D^2)\to \partial (\X \setminus \nu^\circ T_{\gamma_i})$
sending 
\begin{equation}
    [\partial \D^2]\in H_1(\partial (\T^2\times \D^2))\mapsto \theta_{i,1}[\gamma_i]+\theta_{i,2}[\partial \D^2] \in H_1\left(\partial(\X \setminus \nu^\circ T_{\gamma_i})\right).
\end{equation}
There might be more diffeomorphisms satisfying this property but they define diffeomorphic $4$-manifolds.

For $p\in \N$, we define $\X_{p}$ as $$\X_{p} := \X_{\left((1,p),(1,p),(1,p),(1,p)  \right)},$$
where we use the same diffeomorphisms on each boundary component.

\begin{lem}\label{lem:TopPropertiesXp}
    For any $p\in \Z$, $\X_{p}$ is simply-connected and carries a free $\Z_{4}$-action. Moreover the manifolds $\{\X_p\}_{p\in \Z}$ are homeomorphic to each other.
\end{lem}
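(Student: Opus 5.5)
The plan has three parts: simple connectivity from \Cref{lem:SurgeryKillingLongitude}, the free action from equivariance of the surgery data, and homeomorphism via Freedman's classification.

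\emph{Simple connectivity.} We apply \Cref{lem:SurgeryKillingLongitude} to the four tori $T_{\gamma_i}$ at once. Set $W := \X \setminus \bigsqcup_{i} \nu^\circ T_{\gamma_i}$.

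First we check that the meridians $\mu_i$ of $T_{\gamma_i}$ are trivial in $\pi_1(W)$. By property \eqref{p_dual} of \Cref{prop:PropertiesOfToriGamma_i}, the torus $T_{\gamma_i}^*$ is a genus-one geometric dual to $T_{\gamma_i}$ and misses the other $T_{\gamma_j}$. By property \eqref{p_generatorNullHmtpic}, a nonseparating curve on it is null-homotopic in $W$. The proof of \Cref{lem:MeridianTriviality} then shows that $\mu_i$, which is a commutator in $\pi_1(T_{\gamma_i}^*\setminus \mathrm{pt})$, is trivial in $\pi_1(W)$. Hence $\pi_1(W)\simeq \pi_1(\X)$.

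Next, Seifert--Van Kampen applied to the four glued copies of $\T^2\times\D^2$ gives
\[
\pi_1(\X_p)\simeq \pi_1(W)/N(\gamma_1\mu_1^p,\dots,\gamma_4\mu_4^p)\simeq \pi_1(\X)/N(\gamma_1,\dots,\gamma_4).
\]
Here we use property \eqref{p_longitude}: $\gamma_i$ is a longitude of $T_{\gamma_i}$, so it appears in the gluing relation. Since the $\gamma_i$ generate $\pi_1(\X)$, we get $\pi_1(\X_p)=1$.

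The main subtlety is basepoints. The $\gamma_i$ must generate $\pi_1(\X)$ as based loops via arcs that can be chosen in $W$, and the $\gamma_i$ must be pushed onto $T_{\gamma_i}$ compatibly with those arcs. Since all the statements involved are about normal closures, conjugation does not matter.

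\emph{Free action.} By property \eqref{p_equivariant}, the $\Z_4$-action permutes the $T_{\gamma_i}$ cyclically. The trivializations of $\nu T_{\gamma_i}$ were chosen $\Z_4$-equivariant, and the same gluing map (slope $(1,p)$) is used on every component. Hence the action on $W$ extends to $\bigsqcup_i \T^2\times\D^2$ by permuting the copies identically. This action is free because it is free on $W$ and freely permutes the four solid pieces, with no component mapped to itself.

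\emph{Homeomorphism type.} By Freedman's classification, a closed simply-connected $4$-manifold is determined up to homeomorphism by its intersection form and Kirby--Siebenmann invariant, and the latter vanishes because $\X_p$ is smooth. So it suffices to show that $b_2$, $\sigma$ and the parity of the form are independent of $p$.

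\emph{Euler characteristic.} Torus surgery does not change $\chi$, so $\chi(\X_p)=\chi(\X)$ and therefore $b_2$ is constant.

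\emph{Signature.} By Novikov additivity, $\sigma(\X_p)=\sigma(W)+\sigma(\sqcup\,\T^2\times\D^2)=\sigma(W)$, which is independent of $p$.

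\emph{Parity.} This is the hardest step. We use the surface $\Sigma$ of odd self-intersection from item \eqref{item:Non_SpinBaykurHamada} of the review of $\BaykurHamadaSurgered$, placed inside one block. It must be disjoint from the tori $T_{\gamma_i}$. If the construction in \Cref{ssec:ToriGamma} keeps the $T_{\gamma_i}$ away from it, or if the $\gamma_i$ can be isotoped off it, then $\Sigma$ survives in every $\X_p$. Its self-intersection is still odd, so every $\X_p$ is non-spin. Simply-connectedness means that odd forms are then all isomorphic given the same rank and signature, or, if one prefers, indefiniteness lets us apply Serre's classification.

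We conclude that all the $\X_p$ are homeomorphic.
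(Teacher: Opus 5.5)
Your proposal is correct and follows the paper's proof step for step. The meridians are killed using the dual tori via Lemmas~\ref{lem:MeridianTriviality} and~\ref{lem:SurgeryKillingLongitude}; your simultaneous Seifert--Van Kampen computation on $W$ is a cleaner version of the paper's iterative application. Freeness comes from the equivariant gluing. The homeomorphism follows from Freedman's classification, using the invariance of $\chi$ and $\sigma$ together with non-spinness from item~\ref{item:Non_SpinBaykurHamada} of \Cref{sub:BaykurHamadaManifolds}.

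Two small remarks. First, the paper simply asserts the non-spin property that you leave conditional. You can make it unconditional by tubing the odd surface to parallel copies of the square-zero dual tori $T_{\gamma_i}^*$ to remove its intersections with the $T_{\gamma_i}$; this does not change the parity of its square. Second, of your two justifications for the form step only the indefiniteness one (Serre's classification) is valid. Odd forms are determined by rank and signature only when indefinite, and simple-connectedness plays no role there. Indefiniteness here follows from $\sigma=0$ and $b_2>0$.
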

\begin{proof}
    Remember that the loops $\gamma_i$ generate $\pi_1(\X)$. So, it is enough to show that the surgeries yielding $\X_p$ kill these loops and do not introduce new generators in the fundamental group. By  \Cref{prop:PropertiesOfToriGamma_i}, each torus $T_{\gamma_i}$ admits a dual surface $T_{\gamma_i}^*$, and a generator of $\pi_1(T_{\gamma_i}^*)$ is null-homotopic in $\X \setminus \bigsqcup_{i=1}^4 \nu^\circ T_{\gamma_i}$. Thus, we can apply Lemmas~\ref{lem:MeridianTriviality} and \ref{lem:SurgeryKillingLongitude} iteratively to conclude that 
    $$\pi_1(\X_p)\cong \pi_1(\X)/\langle \gamma_1, \dots, \gamma_4 \rangle \cong 1.$$

    Since we are performing the surgeries equivariantly, the free $\Z_{4}$ action on $\X$ extends to a free action on $\X_p$.\\
    Lastly, invoking the classification of simply connected topological $4$-manifolds \cite{Freedman82}, we conclude that all the manifolds $\{\X_p\}$ are homeomorphic to each other: in fact, they are non-spin (see Section \ref{sub:BaykurHamadaManifolds}), and have the same signature and Euler characteristic because these invariants are unchanged by the surgeries.
\end{proof}

\begin{lem}\label{lem:ExoticZ4} The family $\{\X_p\}_{p\in \Z}$  contains infinitely many 
exotic copies of $\X_1$.
\end{lem}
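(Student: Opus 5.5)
We will distinguish the manifolds $\X_p$ by their Seiberg--Witten invariants, using the Morgan--Mrowka--Szab\'o surgery formula for torus surgery together with the fact that $\X = \X_{((0,1),\dots,(0,1))}$ is symplectic. First, $\X_p$ is simply connected by \Cref{lem:TopPropertiesXp}, and $b_2^+(\X_p)=b_2^+(\X)$, which equals $b_2^+(X)\ge 2$. Hence the Seiberg--Witten function $SW_{\X_p}$ is a diffeomorphism invariant, and so is the finite set of its basic classes up to sign.

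The key step is a surgery formula for all four tori at once. For a single torus $T$ with $\nu T$ trivial and meridian $\mu$, the formula gives, for classes agreeing away from $\nu T$,
\[
SW_{X_{(a,b)}} = a\, SW_{X_{(1,0)}} + b\, SW_{X_{(0,1)}},
\]
where the sum runs over the classes that extend compatibly. Here $X_{(0,1)}=X$ and $X_{(1,0)}$ is the $0$-surgery along $\gamma_i$. We will apply this iteratively to $T_{\gamma_1},\dots,T_{\gamma_4}$ with $(a,b)=(1,p)$. Expanding, $SW_{\X_p}$ becomes a sum over subsets $I\subset\{1,\dots,4\}$ of $p^{4-|I|}$ times the invariants of the manifold $\X_I$. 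In $\X_I$ the tori indexed by $I$ have been $0$-surgered and the others are left unchanged. The top-degree term, with coefficient $p^4$, is $SW_{\X}$ restricted to classes extending over the glued tori.

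Since $\X$ is symplectic with $b_2^+>1$, Taubes' theorem gives $SW_{\X}(\pm K_{\X})=\pm1$. Each $T_{\gamma_i}$ has a Lagrangian geometric dual $T_{\gamma_i}^*$ by \Cref{prop:PropertiesOfToriGamma_i}. The adjunction inequality therefore forces every basic class $k$ of any $\X_I$ to satisfy $k\cdot T^*_{\gamma_i}=0$, and the canonical class evaluates to zero on the Lagrangian tori. This lets us match classes across the different $\X_I$ through the common complement $\X\setminus\bigsqcup\nu^\circ T_{\gamma_i}$.

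The conclusion is that, for the class $\kappa_p$ on $\X_p$ corresponding to $K_{\X}$, we get $SW_{\X_p}(\kappa_p)=p^4+O(p^3)$. The lower-order terms are finite sums of invariants of the fixed manifolds $\X_I$, which do not depend on $p$. So $\max_k |SW_{\X_p}(k)|\to\infty$ as $p\to\infty$. Because diffeomorphic manifolds have the same multiset of SW values, infinitely many of the $\X_p$ are pairwise non-diffeomorphic. All $\X_p$ are homeomorphic to $\X_1$ by \Cref{lem:TopPropertiesXp}, so the family contains infinitely many exotic copies of $\X_1$.

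The main obstacle is the bookkeeping in the multi-torus surgery formula. Several relative $\Spinc$ classes on $\X_I$ can restrict to the same class on the common complement. We must rule out cancellation in the $p^4$ coefficient, for instance from classes that differ by multiples of the Poincaré dual of the core tori, or of the new tori in $\X_I$.

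To handle this we will use that $\gamma_i$ has infinite order in $H_1$ of the complement, or at least that the tori are homologically essential. With this, classes differing by $2\,\mathrm{PD}[T_{\gamma_i}]$ can be distinguished by pairing with $T^*_{\gamma_i}$. Failing that, we will fall back on a cruder argument: $SW_{\X_p}$ is a polynomial in $p$ whose leading coefficient is a finite sum of SW values of $\X$. That coefficient is nonzero by Taubes, because $K_{\X}$ is the unique class with value $\pm1$ of maximal $K\cdot[\omega]$ among classes with the prescribed restriction. A nonconstant polynomial takes each value only finitely often, so infinitely many $\X_p$ have distinct invariants.
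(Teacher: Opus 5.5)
Your route is the paper's: iterate the Morgan--Mrowka--Szab\'o formula over the four tori and read off the $p^4$ coefficient as $\sum SW(\X,K+\sum_j i_jT_{\gamma_j})$. Then kill every term with some $i_m\neq 0$ by the adjunction inequality for the Lagrangian duals, via $(K+\sum_j i_jT_{\gamma_j})\cdot T^*_{\gamma_m}=i_m$. That identity needs only $K\cdot T^*_{\gamma_m}=0$ and $T_{\gamma_j}\cdot T^*_{\gamma_m}=\delta_{jm}$ from \Cref{prop:PropertiesOfToriGamma_i}; the order of $\gamma_i$ in $H_1$ is irrelevant. That part is fine.

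\textbf{The gap is in your last step.} The surgery formula never evaluates $SW_{\X_p}$ on a single class. Its left-hand side is the sum over all classes of $\X_p$ with the same restriction to $\X\setminus\bigsqcup_j\nu^\circ T_{\gamma_j}$, i.e.\ over $\kappa_p$ plus combinations of the new core tori $T'_j$. These classes are genuinely different:
\begin{itemize}
\item Since $\gamma_4$ is redundant, $\gamma_4=a\gamma_1+b\gamma_2+c\gamma_3$ in $H_1(\X)$.
\item The meridians bound the punctured duals in the complement, so the corresponding combination of the new meridian curves $\gamma_j+p\mu_j$ bounds there.
\item Capping with meridian discs of the new $\T^2\times\D^2$'s gives a closed surface meeting $T'_4$ algebraically once, so $[T'_4]\neq 0$ in $H_2(\X_p)$.
\end{itemize}
Hence neither $SW_{\X_p}(\kappa_p)=\pm p^4+O(p^3)$ nor $\max|SW_{\X_p}|\to\infty$ follows; the growth could a priori spread over more and more basic classes. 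The repair is the paper's. The sum has at most $n_p$ nonzero terms, where $n_p$ is the number of basic classes, so $n_p\cdot\max|SW_{\X_p}|\geq|{\pm p^4}+q(p)|\to\infty$. Therefore the diffeomorphism invariant $(\max|SW_{\X_p}|,n_p)$ takes infinitely many values.

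Two smaller points:
\begin{itemize}
\item Your fallback via Taubes' $\omega$-maximality would not work. Uniqueness of the basic class maximizing the pairing with $[\omega]$ does not stop the other classes $K+\sum_j i_jT_{\gamma_j}$ in the sum from cancelling $SW(\X,K)$. Moreover, $[\omega]\cdot T_{\gamma_j}=0$ is not known.
\item The equalities $b_2^+(\X_p)=b_2^+(\X)=b_2^+(X)$ are false, since the surgeries lower $b_1$ while preserving $\chi$ and $\sigma$. This is harmless, because $b_2^+(\X_p)=67>1$.
\end{itemize}
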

\begin{proof} By \Cref{lem:TopPropertiesXp}  all the manifolds in the family are homeomorphic.
We will show that we can distinguish the diffeomorphism type of an infinite subfamily by looking at their Seiberg-Witten invariants.

A formula for the Seiberg-Witten invariants of a manifold obtained by a surgery along a torus is given in \cite{Morgan1997ProductFormulas}.
Applying \cite[Thm 1.1]{Morgan1997ProductFormulas} once we obtain

$$\sum_i SW(\X_p, k_p +iT_{\gamma_1}) =\sum_i SW(\X_{e_1 p}, k_{e_1 p} +iT_{\gamma_1}) + p  \sum_i SW(\X_{e_2 p}, k_{e_2 p} +iT_{\gamma_1}),$$

where $e_1 = (1,0), e_2 = (0,1)$ and $e_1 p = (e_1, (1,p),(1,p),(1,p))$ and $e_2 p = (e_2, (1,p),(1,p),(1,p))$, and the classes $k_{e_ip}$ are characteristic elements depending on the multindex. 
Another application of \cite[Thm 1.1]{Morgan1997ProductFormulas} gives

\begin{equation*}
    \begin{split}
    \sum_{i_1,i_2} SW(\X_p, k_p & +\sum_{j=1,2}i_j   T_{\gamma_j})  =   \sum_{i_1,i_2} SW(\X_{e_1 e_1 p}, k_{e_1 e_1 p} 
     +\sum_{j=1,2}i_j T_{\gamma_j})\\ & + p\sum_{i_1,i_2}SW(\X_{e_1 e_2 p}, k_{e_1 e_2 p} +\sum_{j=1,2}i_j T_{\gamma_j}) \\  &  + p\sum_{i_1,i_2}SW(\X_{e_2 e_1 p}, k_{e_2 e_1 p} +\sum_{j=1,2}i_j T_{\gamma_j})
     + p^2\sum_{i_1,i_2}SW(\X_{e_2 e_2 p}, k_{e_2 e_2 p} +\sum_{j=1,2}i_j T_{\gamma_j}).\\
    \end{split}
\end{equation*}
where $e_i e_j p = (e_i, e_j, ,(1,p),(1,p))$. 
It is now clear that applying iteratively the formula \cite[Thm 1.1]{Morgan1997ProductFormulas} we obtain that
\begin{equation}\label{eq:SWPolynomial}
    \sum_{i_1,i_2,i_3,i_4} SW(\X_p, k_p +\sum_j i_j T_{\gamma_j}) = p^4 \sum_{i_1,i_2,i_3,i_4}SW(\X, k + \sum_j i_j T_{\gamma_j}) + q(p),
\end{equation}
where $q(p)$ is a polynomial of degree at most $3$ and we used the identification  $\X_{(e_2, e_2, e_2, e_2)}\simeq \X$.

Let $k$ be equal to the canonical class of the symplectic manifold $\X$ (see subsection \ref{subsec:ConstructionXhi}). We claim that $SW(\X, k + \sum_j i_j T_{\gamma_j}) \neq 0$ if and only if 
$i_1=i_2=i_3=i_4 = 0$. By \cite{taubes1994seiberg},
$SW(\X, k )\neq 0$. The dual tori $T_{\gamma_i}^*$
are essential in homology and Lagrangian  \ref{prop:PropertiesOfToriGamma_i}, thus adjunction inequality \cite{kronheimer1994genus,morgan1996product,ozsvath2000symplectic}  gives that $k\cdot T_{\gamma_i}^* = 0$.

If a class $k + \sum_j i_j T_{\gamma_j}$ is basic, by adjunction inequality we obtain that for any $m=1,\dots, 4$,
$$ 0 = -\chi(T_{\gamma_m}^*) \geq \underbrace{T_{\gamma_m}^*\cdot T_{\gamma_m}^*}_{= 0} + | (k+ \sum_j i_j T_{\gamma_j})\cdot T^*_{\gamma_m} | = |i_{m}|,$$ which proves the claim.

Now \eqref{eq:SWPolynomial} and the claim give, for $k$ being the canonical class,
\begin{equation*}
    \sum_{i_1,i_2,i_3,i_4} SW(\X_p, k_p +\sum_j i_j T_{\gamma_j}) = p^4 SW(\X, k) + q(p).
\end{equation*}
On the right hand side we have a non-constant polynomial in $p$. Consequently, denoting by $n_p\in \N$ the number of basic classes of $\X_p$, it follows that 
$$\left(\max_{\mathfrak{s}\in \Spinc} |SW(\X_p, \mathfrak{s})|, n_p\right)\in \N\times \N$$
diverges as $p\to \infty$. 
\end{proof}

\subsection{The tori $T_{\gamma_i}$}\label{ssec:ToriGamma} In this subsection we prove \Cref{prop:PropertiesOfToriGamma_i}. We begin by describing more precisely the curves $\gamma_i\subset \X$, $i=1,\dots, 4$.
Such curves can be described in $X$ because they do not intersect the surgery tori used to produce $\X$ from $X$.
Recall that $X$ is obtained by gluing together the four blocks 
$$B^\ii \setminus \nu^\circ (S^\ii\cup T_1^\ii\cup T_2^\ii)\simeq   B\setminus \nu ^\circ(S\cup T_1\cup T_2), \quad \text{for $i=1, \dots, 4.$}$$

Since $\gamma_{i+1} = \mathbf {1} \cdot \gamma_i$, it is enough to describe
the intersection of the curves $\gamma_i$ with the first block $B^{(1)} \setminus \nu^\circ (S^{(1)}\cup T_1^{(1)}\cup T_2^{(1)})$ thus we set $$\alpha_i := \underbrace{(\mathbf{1-i})}_{\in \Z_4}\cdot(\gamma_1\cap B^{(i)})=\gamma_{2-i} \cap B^{(1)} \quad \text{ for $i=1,\dots, 4$}.$$
The arcs $\{\alpha_i\}_i$  lie on the fiber over a point  $p\in x_1\subset \Sigma_2$. More precisely, let $F\subset B^{(1)}$ be the fiber of $B^{(1)}\to \Sigma_2$ over $p$ and let $F^\circ = F \setminus \nu^\circ (S^{(1)}\cup T_1^{(1)}\cup T_2^{(1)})$.
Then the arcs $\alpha_1,\alpha_2, \alpha_3$ are contained in $ F^\circ$ and are given by  \Cref{Fig:Gammas}(this is how we define them).
Notice that $\gamma_4 \cap B^{(1)} = \emptyset$ hence there is no arc $\alpha_4$.

\begin{remark}
    The endpoints of $\alpha_1$ and $\alpha_3$ intersect $\partial \nu S^{(1)}$ in two antipodal points. This is because when we defined the gluing map \eqref{eq:GluingMapSection1}
    we mapped the normal circle to itself as $z\mapsto -z$.
    
    On the other hand, this does not happen for the intersections with $\partial \nu T_1^{(1)}$ (and $\partial  \nu T_2^{(2)}$). 
    Indeed $\nu T_1 \simeq (x_1\times \D^1)\times (\alpha_6 \times \D^1)$
    and we can suppose that the gluing map acts on the normal circle as 
    $$\partial(\D^1\times \D^1) \to \partial (\D^1\times \D^1)$$
    $$(t_1,t_2)\mapsto (-t_1, t_2).$$
    This explains why $\alpha_1$ and $\alpha_2$ can intersect $\partial \nu T_1^{(1)}$ in points with the same $\D^1$-coordinate.
\end{remark}

Now that we have a precise definition of the loops $\gamma_i$, we can define 
\begin{align}\label{eq:DefTGammaAndDuals}
T_{\gamma_i} = x_1\times \gamma_i \subset X &  & T_{\gamma_i}^* =  y_1 \times a_7 \subset B^{(i)}\subset X.  
\end{align}
More precisely, $\gamma_i$ intersects the $j$th- block in an arc $\gamma_i\cap B^{(j)}$ and on each block we can consider a loop in the base of the fibration given by $x_1\subset \Sigma_2$.
The torus $T_{\gamma_i}$ is obtained by gluing together 
the cylinders $x_1\times (\gamma_i\cap B^{(j)})$ for $j=1,\dots, 4$ . Recall that $x_1$ is fixed by $r:S\to S$ by assumption.

The dual torus $T_{\gamma_i}^*$ instead lies entirely in the block $B^{(i)}$ and is defined using again the fibration $B^{(i)}\to \Sigma_2$.

Once again, since the tori $T_{\gamma_i}$ and their duals do not intersect the surgery tori in $X$ used to produce $\X$, they define tori in $\X$.

\begin{proof}[Proof of \Cref{prop:PropertiesOfToriGamma_i}]
\ref{p_longitude}), \ref{p_equivariant}), \ref{p_dual} are straightforward consequences of our construction.

We prove \ref{p_selfIntersection}), by equivariancy it is enough to  compute the self-intersection of $T_{\gamma_1}$. Consider a pushoff $\tilde \gamma_1$ of $\gamma_1$ using the normal lying in $F^\circ$.
From the picture we see that 
$\tilde \gamma_1\cap \gamma_1 \neq \emptyset$. 
Hence, the torus $x_1\times \tilde\gamma_1$, intersects $T_{\gamma_1}$ along  the circles $x_1\times (\tilde \gamma_1\cap \gamma_1)$.
This intersections however are not transversal and indeed it is not difficult to see, working locally to $\tilde \gamma_1\cap \gamma_1$,  that can be perturbed away resulting in a pushoff of $T_{\gamma_1}$ which does not meet $T_{\gamma_1}$.

Item \ref{p_generatorNullHmtpic}) follows from the fact that the loop $y_2$ is null-homotopic, indeed the surgieries yielding $\BaykurHamadaSurgered$ from $B$ kill all the generators of $\pi_1(B)$ coming from the base of the fibration. 

\ref{p_Lagrangian}) 
The tori $\{T_{\gamma_i}^*\}_i$ 
are disjoint from all the surfaces used to glue the copies of $B$ together and to perform the surgeries yielding $\X$.
Therefore near $\{T_{\gamma_i}^*\}_i$
the symplectic structure
is the same as in a copy of the Lefschetz fibration $B$.  Since, for all $i$, $T_{\gamma_i}^*$ is a product of a loop in the base and a loop in the fiber of $B$, $T_{\gamma_i}^*$ is Lagrangian.
\end{proof}

\subsection{Proof of \Cref{Thm1} case $k=1$} We are now ready to prove the first case of \Cref{Thm1} i.e. when the fundamental group is $\Z_4$.
Recall that by \cite[Thm. C]{HambletonCancellation}  there is exactly one homeomorphism class of closed, oriented, smoothable  topological $4$-manifolds with finite cyclic fundamental group and non-spin universal cover for any signature and Euler characteristic.

\begin{proof} The cases where $\min (b_2^+, b^-_2)$ is odd follow by applying \cite[Lem. 13]{Torres2} to the Baykur-Hamada manifold $\BaykurHamadaSurgered$, jointly with blow-ups and orientation reversal. We only need to cover the cases where $\min (b^+_2, b^-_2)$ is even.

    Consider the family $\{\X_p/\Z_{4}\}_{p\in S}$ where $S$ indexes the subfamily of \Cref{lem:ExoticZ4}. The manifolds in this family are pairwise non-diffeomorphic because their universal covers are non-diffeomorphic. By  \ref{item:Non_SpinBaykurHamada}) of \Cref{sub:BaykurHamadaManifolds}, their universal cover is non-spin  and have the same signature and Euler characteristic. 
    Hence,  by \cite[Thm. C]{HambletonCancellation}, they are all homeomorphic to each other. \\
    We can compute their topological invariants explicitly: notice that removing the genus $2$ surface $S$ from $B$ increases the Euler characteristic by two, while removing tori, surgery along tori, and gluing along  $3$-manifolds leave the Euler characteristic unchanged. Thus, $\chi(\X_p)= 4 (\chi (B)+2).$
    Since the signature is a cobordism invariant and both  signature and Euler characteristic are multiplicative under coverings, we deduce that 
    $$\sigma (\X_p/\Z_{4})=0, \ \ \chi(\X_p/\Z_{4})= \chi(B)+2= 34,$$

    where we used the fact that $\BaykurHamadaSurgered$ is homeomorphic to $\#_{15}(\CP^2\#\overline{\CP}^2)$.   
    Thus, since $b_1=0,$
$$b_2^+(\X_p/\Z_{4})=b_2^-(\X_p/\Z_{4})=16.$$

Increasing the genus of the fiber of $B$ from $g=8$ to $g=8+h$, $h>0$, increases $\chi(B)$ by $4h$.
Hence, repeating our construction,  we obtain examples with $b^+_2$ increased by $2h$ and signature zero.
Lastly, we can blow up (equivariantly) and change the orientation of our manifolds to obtain examples with $b_2^+\neq b_2^-$.\end{proof}

\section{Fundamental group $\Z_{4k}$}\label{sec:GroupZ4k}

We  now generalize the previous construction to cover all fundamental groups $\Z_{4k}$ for $k\geq 1$.\\

\subsection{The construction.} We maintain the notation from  the previous section. Consider $4k$ copies of the Baykur-Hamada fibration $\{\BaykurHamada^{(i)}\}_{i=1}^{4k}$. Let $X$ be the manifold obtained by  removing from $B^{(i)}$ tubular neighborhoods of the section $S^{(i)}$ and of the tori $T_1^{(i)}, T_2^{(i)}$  for each $i$, and then gluing them along the boundaries
with maps 
\begin{align*}
   &  \partial \nu S^{(i)} \rightarrow \partial \nu S^{(i+2k)}, & &  \partial \nu T_1^{(i)} \rightarrow \partial \nu T_2^{(i+1)} \\
   & (x, z)\mapsto (r(x),-z) & & (x,z)\mapsto (x,\overline{z}).
\end{align*}

Arguing as before, the manifold $X$ is symplectic and $\Z_{4k}$ acts freely on it sending the $i$th-block to the $(i+1)$th-block. We then perform the same Luttinger surgeries as done by Baykur-Hamada in each block  and call $\X$ the result. Now, $\pi_1(\X)$ is generated by loop $\gamma_1, \dots, \gamma_{4k}$, where $\gamma_1$ goes across the blocks $B^{(1)}, B^{(2)}, \dots, B^{(2k+1)}$, and $\gamma_{i+1}= \textbf{1} \cdot\gamma_i$ see \Cref{Fig:Gammas4k}(Right).
As in \Cref{ssec:ToriGamma}, we describe such curves by their intersections $\alpha_i := (\mathbf{1-i})\cdot (\gamma_1\cap B^{(i)})= \gamma_{2-i} \cap \BaykurHamada^{(1)}$, shown in Figure~ \ref{Fig:Gammas4k}. 
Note that in \Cref{Fig:Gammas4k}  the discs $D_1,D_2$ are removed and their boundaries identified, the  loop $a_7$ appears as $\partial D_2$.
This allows the arc $\alpha_2$ to not cross the other arcs $\alpha_i$'s.

\begin{figure}
    \centering
    \includegraphics[scale=0.3]{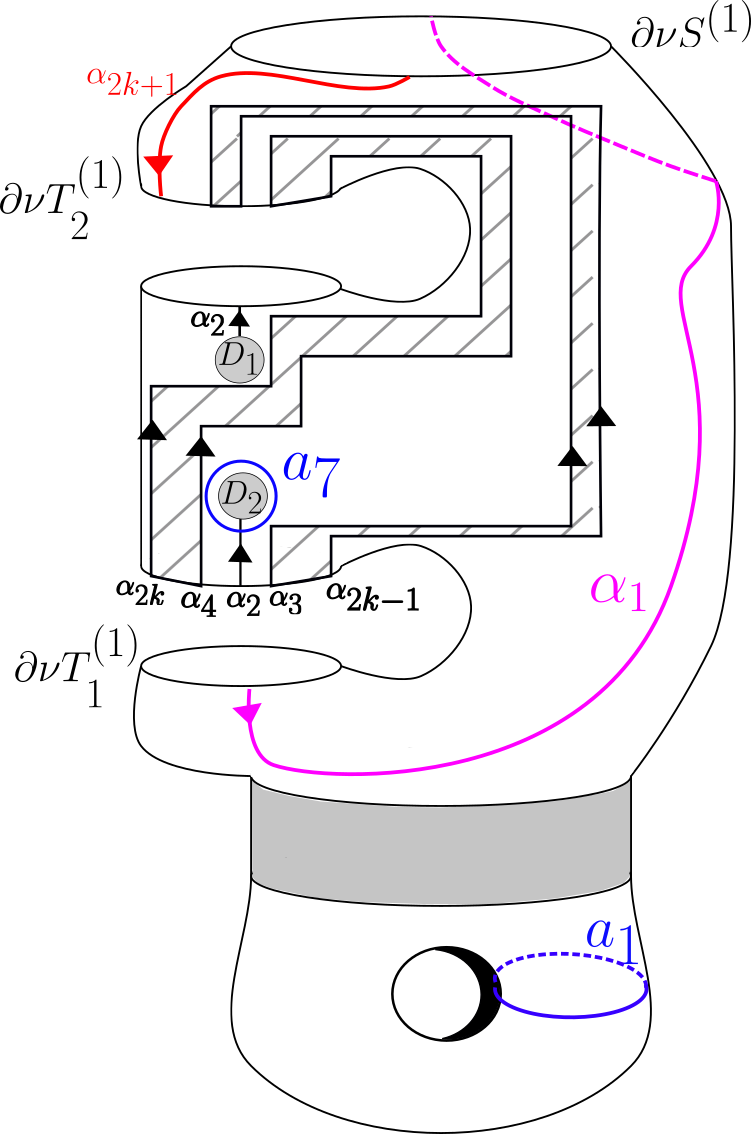}
   \includegraphics[scale=0.4]{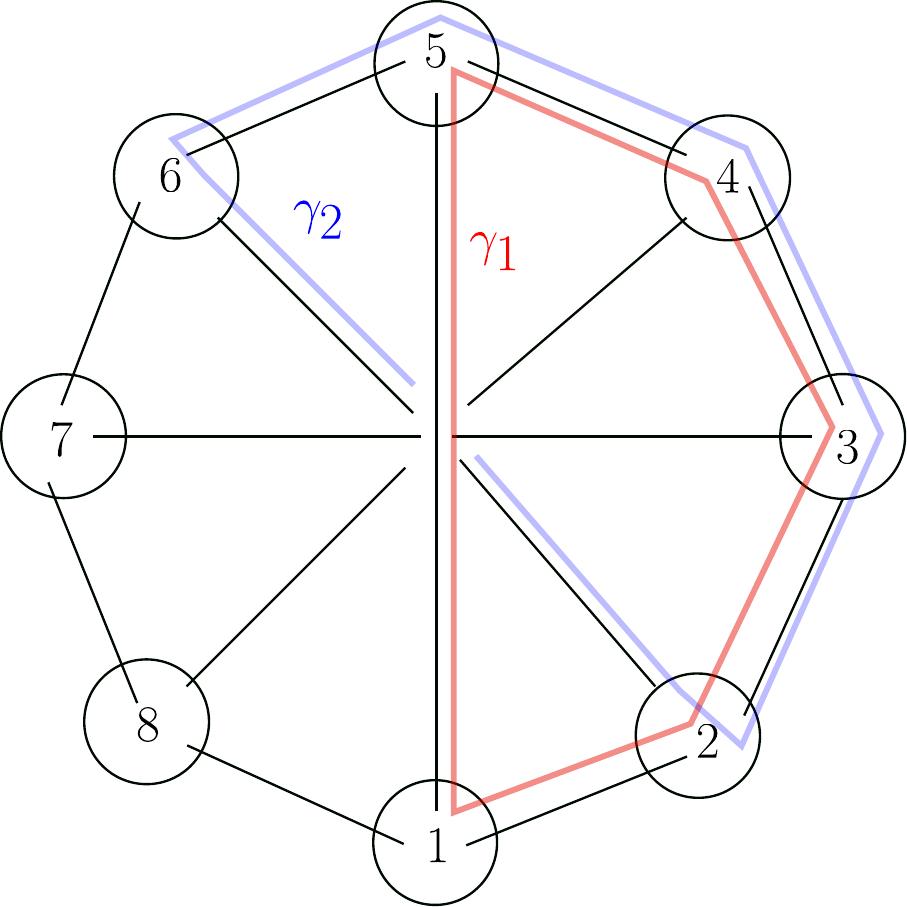}
   \caption{\label{Fig:Gammas4k} Left: the arcs $\alpha_i = \gamma_{2-i} \cap B^{(1)}$ lying on the  fiber of $B^{(1)}\setminus \nu (S^{(1)}\cup T_1^{(1)}\cup T_2^{(1)})$. Note that $a_7 = \partial D_2$ (this is $a_7$ not $\alpha_7$). Right: schematic depiction of $X$ for $k=2$. Each vertex is a block $B^{(i)}$, the internal edges represent identification of sections whilst the external edges are due to the identification of tori. The picture also shows how the paths $\gamma_1$ and $\gamma_2$ cross the blocks.} 
\end{figure}

For $i=1,\dots, 4k$, we define the tori $T_{\gamma_i}\subset \X$ and their duals $T_{\gamma_i}^* $ as 
\begin{align}
T_{\gamma_i} = x_1\times \gamma_i \subset X &  & T_{\gamma_{i}}^* =  y_1 \times a_7 \subset B^{(i+1)}\subset X.  
\end{align}
These tori satisfy the thesis of \Cref{prop:PropertiesOfToriGamma_i}. Analogously to  \Cref{ssec:ConstructionOfX_p}, we define the manifold  $\X_p$ by performing $\Z_{4k}$ equivariant surgeries along the tori $T_{\gamma_i}$ with parameter $(1,p)$.

\begin{lem}
    For each $p\in \Z$, the manifolds $\X_p$ is simply-connected, homeomorphic to $\X_1$, and admits a free $\Z_{4k}$ action. Moreover, the family $\{\X_p\}_{p\in \Z}$ contains infinitely many exotic manifolds.
\end{lem}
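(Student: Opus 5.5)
The plan is to repeat the argument for the case $k=1$, that is, \Cref{lem:TopPropertiesXp} followed by \Cref{lem:ExoticZ4}, with the four tori replaced by the $4k$ tori $T_{\gamma_1},\dots,T_{\gamma_{4k}}$. The only new input is that these tori satisfy the conclusions of \Cref{prop:PropertiesOfToriGamma_i}. We check this first from \Cref{Fig:Gammas4k}. The tori $T_{\gamma_i}=x_1\times\gamma_i$ are disjoint, because the arcs $\alpha_i$ are disjoint in $F^\circ$; this is what the identification $\partial D_1\sim\partial D_2$ buys. They are permuted by the generator, since $\gamma_{i+1}=\mathbf{1}\cdot\gamma_i$ and $x_1$ is fixed by $r$. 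Each dual $T^*_{\gamma_i}=y_1\times a_7\subset B^{(i+1)}$ meets $T_{\gamma_i}$ in exactly one point. This single intersection occurs where $\gamma_i$ crosses $a_7=\partial D_2$ transversally once in $B^{(i+1)}$, and the dual misses $T_{\gamma_j}$ for $j\neq i$, which we read off the figure. The dual is Lagrangian for the same reason as before, since it is a product of a base curve and a fibre curve away from all gluing and surgery loci. The self-intersection of $T_{\gamma_i}$ is zero by the same local pushoff argument. Finally, $y_1$ is null-homotopic in the complement of the $T_{\gamma_j}$: it is killed by the Luttinger surgery on $y_1\times a_5$ inside $\BaykurHamadaSurgered$, and this nullhomotopy lives in the block away from $S,T_1,T_2$ and the $T_{\gamma_j}$, as in \Cref{sub:BaykurHamadaManifolds}.

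Topology comes next. The generators of $\pi_1(\X)$ are the $\gamma_i$, because each block $\BaykurHamadaSurgered\setminus(S\cup T_1\cup T_2)$ is simply connected and the dual graph of the gluings has loops represented by the $\gamma_i$. For each $i$ in turn, we apply \Cref{lem:MeridianTriviality} with $\Sigma=T^*_{\gamma_i}$ (genus $1$, with $y_1$ trivial) and then \Cref{lem:SurgeryKillingLongitude} with $\lambda=\gamma_i$. This gives $\pi_1(\X_p)=\pi_1(\X)/N(\gamma_1,\dots,\gamma_{4k})=1$. The surgeries are done equivariantly with the same gluing map on every component, so the free $\Z_{4k}$-action extends over the glued-in copies of $\T^2\times\D^2$ and stays free, because it permutes these copies freely. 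Each $\X_p$ is non-spin by \ref{item:Non_SpinBaykurHamada}) of \Cref{sub:BaykurHamadaManifolds}. Euler characteristic and signature are unchanged by torus surgeries. Freedman's classification therefore makes all the $\X_p$ homeomorphic to $\X_1$.

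Exoticness follows by iterating \cite[Thm 1.1]{Morgan1997ProductFormulas} once for each of the $4k$ tori. This gives
\[
\sum_{i_1,\dots,i_{4k}} SW\Bigl(\X_p,k_p+\sum_j i_jT_{\gamma_j}\Bigr)=p^{4k}\sum_{i_1,\dots,i_{4k}}SW\Bigl(\X,k+\sum_j i_jT_{\gamma_j}\Bigr)+q(p),
\]
with $\deg q<4k$. We take $k$ to be the canonical class of the symplectic manifold $\X$, so Taubes gives $SW(\X,k)\neq0$. The adjunction inequality applied to the Lagrangian, homologically essential duals $T^*_{\gamma_m}$ gives $k\cdot T^*_{\gamma_m}=0$, and it forces $i_m=0$ for any basic class of the form $k+\sum_j i_jT_{\gamma_j}$. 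The right-hand side is then a nonconstant polynomial in $p$. Because the number of basic classes of $\X_p$ is finite, the pair (maximal $|SW|$, number of basic classes) is unbounded as $p\to\infty$, so infinitely many of the $\X_p$ are pairwise non-diffeomorphic.

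The main obstacle is the first step: verifying from \Cref{Fig:Gammas4k} that the $4k$ arcs can be arranged disjointly, each crossing $a_7$ exactly where required, so that disjointness, duality, and the nullhomotopy of $y_1$ in the complement hold simultaneously. Everything after that is formally identical to the $k=1$ case.
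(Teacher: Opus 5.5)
Your proposal is correct and follows the paper's argument. The paper proves this lemma only by saying it is the same as \Cref{lem:TopPropertiesXp} and \Cref{lem:ExoticZ4} mutatis mutandis; you carry that transfer out for the $4k$ tori, and you rightly identify the one new input as checking \Cref{prop:PropertiesOfToriGamma_i} for $T_{\gamma_i}=x_1\times\gamma_i$ and $T^*_{\gamma_i}=y_1\times a_7\subset B^{(i+1)}$ against \Cref{Fig:Gammas4k}, where the single intersection comes from the arc $\alpha_2$ crossing $a_7$. One notational point: you use $k$ for both the group parameter and the canonical class, so rename one of them in the formula with $p^{4k}$.
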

\begin{proof}
    Same as in \Cref{lem:TopPropertiesXp} and \Cref{lem:ExoticZ4} mutatis mutandis.
\end{proof}
\subsection{Proof of \Cref{Thm1}, case $k>1$} We can now prove the general case of \Cref{Thm1}.
\begin{proof} The examples are constructed from  $\{\X_p/\Z_{4k}\}_{p\in \Z}$ as in the proof for the case $k=1$. 
\end{proof}
\section{Fundamental group $\Z_{2}\times G$ with $G$ finite}\label{sec:Z2G}
In this section we prove \Cref{Thm2}, more precisely
we will give a construction to produce exotic pairs with signature zero, even $b^+$ and fundamental group $\Z_{2}\times G$ for any given finite group $G$. 

\newcommand{\BaykurStipSzaboSurgered}{\mathcal{L}}

\subsection{The fibration $L$ and the manifold $\BaykurStipSzaboSurgered$.}
First of all we enlarge the base of the Baykur-Hamada fibration by fiber summing with the trivial fibration over $\Sigma_2$ obtaining $B'\to \Sigma_4$ with a section $S$ with $S\cdot S=0$.

Consider the free, orientation reversing, involution  $r:\Sigma_4\to \Sigma_4$ described in \Cref{Pic:InvolutionSigma4}. 
\begin{figure}
    \centering
    \includegraphics[scale=0.5]{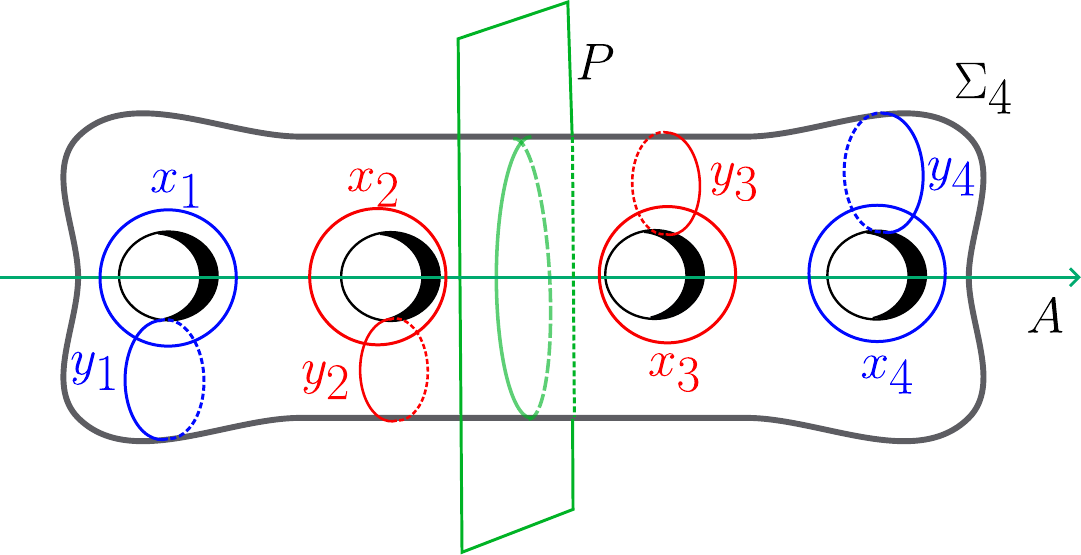}
    \caption{\label{Pic:InvolutionSigma4} Surface $\Sigma_4$ with the curves $x_i, y_i$, $i=1,\dots,4$. The involution $r$ is given by a $\pi$-rotation in the axis $A$ followed by a reflection in the plane $P$.}
\end{figure}

We define a Lefschetz fibration $L\to \Sigma_4$  by 
gluing together $B'\setminus \nu S$
to  itself with the map
$$\Sigma_4\times \S^1\to \Sigma_4\times \S^1 = (x,z)\mapsto (r(x), z).$$

Notice that the fiber of $L$ at $p\in \Sigma_4$ is the connected sum of the fiber of $B'$ at $p$ with the fiber at $r(p)$.
In particular $L$ has twice the genus of $B'$ and twice its singularities.

$L$ carries a free $\Z_{2}$-action given by $r$ on the base and a reflection $\tau$ on the fiber fixing the circle splitting the connected sum.
We will denote by  $F$  the generic fiber of $L$  and by $\mu$ a product of right handed Dehn twists and two commutators in $Aut(F)$ defining the fibration. We can assume  that the monodromy defined by $\mu$ fixes a disk $D\subset F$ since $L$ has a section with zero self-intersection. 
The involution $\tau:F\to F$ preserves $D$, and acts on it as a reflection.

Performing  $\Z_{2}$-equivariantly on $L$ the surgeries that yield $\BaykurHamadaSurgered$ from $B$,  we obtain a simply-connected symplectic manifold $\BaykurStipSzaboSurgered$ with a free $\Z_{2}$-action.

This construction is originally due to Baykur-Stipsicz-\Szabo who performed it in \cite[section 3]{BaykurStipsiczSzabo}, except for enlarging the base of $B$.  This will come in handy later to find more disjoint, equivariant, surgery tori.

\begin{remark}\label{rem:ToriKnotSurgery}
There exists in $\BaykurHamadaSurgered \setminus S$ a homologically essential torus $T$ with simply-connected complement ($T_1$ or $T_2$ with the notation of \Cref{sub:BaykurHamadaManifolds}). Hence, there is a pair of homologically essential tori with simply-connected complement on which we can perform $\Z_{2}$-equivariantly knot surgery \cite{Fintushel1998KnotsLinksAnd4Manifolds} to obtain exotic copies of $\BaykurStipSzaboSurgered$.
\end{remark}

\subsection{The manifold $X$}
\subsubsection{Cayley graphs.} Let $G$ be a finite group and choose a finite presentation for $G$:
\begin{equation}\label{eq:presentation}
    G = \langle g_1,\dots, g_e \ | \ R_1,\dots, R_r\rangle
\end{equation}
where $e,r \in \N$ and $\{R_i\}_i$ are some relators.
We assume that, in $G$,  $g_j\neq g_i, g_i^{-1}$ for $i\neq j$. The relators may contain the inverse of the generators.
We denote by $\Gamma = (G,E)$ the  Cayley graph for $G$ induced by \eqref{eq:presentation}. By definition $\Gamma$ has a vertex for each element of $G$ and we create an undirected edge from $x$ to $g_i x$
for any generator $g_i$.
As a result, $\Gamma$ has $ |G|$ vertices and $|E| = e|G|$ edges.

\subsubsection{The surface $F_G$}Let $F_G$ be the oriented surface obtained from $\Gamma$ by replacing each vertex with $F$  and each arc with a tube. 
$F_G$ carries an obvious free, orientation preserving, $G$-action induced by the free action of $G$ on $\Gamma$. We can promote it to a $\Z_{2}\times G$-action. 
$F_G$ can be thought as
a connected sum along the graph $\Gamma$ of $|G|$ disjoint copies of $F$ at the points $p_1,\dots, p_{2e}\subset F$.
Without loss of generality we can assume that $\{p_i\}_{i=1}^{2e} $ is contained in $D$ and fixed by $\tau$. 

In order to define an involution of $F_G$ it is enough to consider an involution of  $F$ fixing $\{p_i\}_{i=1}^{2e}$, we choose $\tau$.
The resulting involution acts in the same way on each vertex and edges so it commutes with the $G$-action.

\subsubsection{The manifold $X$} We want to define a Lefschetz fibration  with fiber $F_G$ and base $\Sigma_4$. Let $F^\circ = F\setminus \mathrm{int}(D)$. For each $g\in G$ there is an  inclusion $\iota_g : F^\circ \to F_G$ inducing an embedding $MCG(F^\circ,\partial F^\circ)\to MCG(F_G, F_G\setminus \iota_g(F^\circ))$.
We can therefore concatenate $|G|$ copies of the factorization $\mu$ 
and interpret it as a product of $4|G|$ commutators in $MCG(F_G)$.
Actually, this is a product of just $4$ commutators because the mapping classes are supported in distinct copies of $F^\circ\subset F_G$ induced by the embeddings $\iota_g$s.
 
 We call the resulting $4$-manifold $X$. The manifold
    $X$ has a free $G$-action acting fibewise as $G$ on $F_G$. 
    Notice that singular fibers do not pose any problem as vanishing circles are sent to vanishing circles by this action.
    
    This action can be promoted to a free $\Z_{2}\times G$-action. 
    On each fiber we already defined an action of $\Z_{2}$, which however is not free, to overcome this problem, we compose this action with the fixed point free, orientation reversing involution $r:\Sigma_4\to \Sigma_4$ given in \Cref{Pic:InvolutionSigma4}.
    The result is a free orientation-preserving $\Z_{2}\times G$-action.
    \begin{remark}\label{rem:r_loops}
        Notice that (see \Cref{Pic:InvolutionSigma4})
    $r$ exchanges the following loops in $\Sigma_4$:
    \begin{align*}
    &x_1\longleftrightarrow x_4 & & y_1\longleftrightarrow y_4\\    
    &x_2\longleftrightarrow x_3 && y_2\longleftrightarrow y_3.\\
    \end{align*}
    \end{remark}

    \begin{lem} $X$ is closed and the  $\Z_{2}\times G$-action defined above is free and orientation preserving.
    The Lefschetz fibration admits a section hence $X$ admits a symplectic structure. Moreover it has zero signature. 
    \end{lem}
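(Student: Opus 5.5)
We verify the four assertions of the lemma in turn: closedness, freeness and orientation of the action, existence of a symplectic structure, and vanishing of the signature.

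\textbf{Closedness and well-definedness.} $X$ is the total space of the Lefschetz fibration over the closed surface $\Sigma_4$ with closed fiber $F_G$ determined by the concatenated factorization. The factorization is obtained from $|G|$ copies of $\mu$, pushed into $MCG(F_G)$ via the embeddings $\iota_g$. Each copy of $\mu$ is a product of right-handed Dehn twists and commutators whose product is the identity relative to $\partial F^\circ = \partial D$. Hence each pushed copy is trivial in $MCG(F_G)$, and so is the concatenation. Since the supports $\iota_g(F^\circ)$ are pairwise disjoint, the commutators for different $g$ commute and can be merged into $4$ commutators, as already observed. Therefore the factorization defines a closed $4$-manifold fibering over $\Sigma_4$.

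\textbf{The action.} Over $\Sigma_4$ minus small discs around the critical values, $X$ is built from the monodromy representation. Because the relevant mapping classes are $G$-equivariant (the copies of $\mu$ are permuted by $G$), the $G$-action on $F_G$ extends fiberwise. Similarly $\tau$ commutes with $\mu$ on each copy of $F$: in $L$ the monodromy is $\tau$-symmetric, since $L$ was built so that $\tau$ intertwines the monodromy over $p$ and over $r(p)$. Hence the map (action of $\tau$ on fibers) $\times$ ($r$ on the base) is a bundle map.

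Freeness follows from two observations. The $G$-action is free on $F_G$ because $G$ acts freely on the vertices and edges of $\Gamma$. An element $(\mathbf 1,g)$ acts on the base by the free involution $r$, so it moves every fiber. Orientation: $G$ preserves both the fiber and base orientations. The element $(\mathbf 1, e)$ reverses both the base (via $r$) and the fiber (via the reflection $\tau$), so it preserves the total orientation.

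\textbf{Main obstacle.} The delicate point is checking that the $\Z_2$-part genuinely matches the monodromy, i.e.\ that $\tau\circ\mu_{\gamma}\circ\tau^{-1}=\mu_{r(\gamma)}$ for loops $\gamma$ in the base. We would argue this by noting that, away from the tubes, $X$ restricted to each copy of $F^\circ$ is exactly a copy of $L$ minus a neighbourhood of its section. In $L$ this compatibility holds by the Baykur--Stipsicz--Szab\'o construction. The tubes sit over $D$, where the monodromy is trivial and $\tau$ acts as a reflection fixing the points $p_i$, so the actions glue.

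\textbf{Section and symplectic structure.} A section is given by a point of some tube region fixed by the monodromy, for instance a point on a tube connecting two copies, where all monodromy is trivial; this yields a section of self-intersection $0$. The fiber is homologically essential, since the section intersects it once. By Gompf--Thurston (as in \cite{GompfNewConstructions}), $X$ then carries a symplectic structure.

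\textbf{Signature.} Use Novikov additivity after cutting $X$ along the tube regions. Alternatively, compute via $\sigma(X)=\sum$ of local contributions of the vanishing cycles (Meyer/Endo). The vanishing cycles are $|G|$ copies of those of $L$, which is a double of $B'$, and each copy contributes as in $L$. Since $\sigma(L)=2\sigma(B')=0$, with $\sigma(B')=\sigma(B)=0$ because fiber sum with the trivial fibration adds a signature-zero piece, the total signature is $|G|\cdot 0=0$.
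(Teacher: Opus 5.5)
Your proof is correct. For the section and the symplectic structure it is the paper's argument: a point of $F_G$ outside $\bigsqcup_g\iota_g(F^\circ)$, where the monodromy is trivial, gives a square-zero section, so the fiber is essential and Gompf's theorem applies.

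The genuine difference is the signature. The paper argues that the signature of a Lefschetz fibration is determined by its vanishing circles and framings, which here are $|G|$ disjoint copies of those of $L$. Over a base of positive genus this principle is not valid as stated: the commutator part of the monodromy can carry signature, as Kodaira fibrations show. Your ``alternative'' via Meyer/Endo local signatures has the same defect, since local signature formulas are available only for restricted classes such as hyperelliptic fibrations.

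Your primary route is what actually proves the claim. Every mapping class in the concatenated factorization is supported in the disjoint subsurfaces $\iota_g(F^\circ)$ and is trivial relative to $\partial D$. Hence $X$ splits as $|G|$ copies of $L\setminus\nu^\circ S$ together with the product $\Sigma_4\times P$, where $P:=F_G\setminus\bigsqcup_g\iota_g(\mathrm{int}\,F^\circ)$, glued along copies of $\Sigma_4\times\S^1$. Novikov additivity then gives $\sigma(X)=|G|\,\sigma(L)+\sigma(\Sigma_4\times P)$. You should add the two one-line facts this needs: $\sigma(\Sigma_4\times\D^2)=0$, and $\sigma(\Sigma_4\times P)=0$ because a reflection of $\Sigma_4$ times the identity is an orientation-reversing self-diffeomorphism.

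The same splitting is what makes your verification of closedness and of the free, orientation-preserving $\Z_2\times G$-action rigorous; the paper's proof leaves these to the preceding construction. State explicitly that the $|G|$ copies of each critical point lie over a common critical value, so singular fibers have $|G|$ nodes. With literally concatenated factorizations the critical values of different copies would be distinct, and no fiberwise $G$-action covering $\mathrm{id}_{\Sigma_4}$ could permute them.
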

    \begin{proof}     
    Since $\mu$ fixes $D\subset F$,  there is a fibration preserving embedding of $L$ minus a section in $X$. Hence $X$ has a section and $[F_G]\neq 0\in H_2(X)$, thus $X$ can be given a symplectic structure \cite[Thm 10.2.18]{gompfStipsicz4ManifoldsAndKirbyCalculus},.

    The signature of a closed Lefschetz fibration depends on the vanishing circles and their framings. 
    In the case of $X$, the latter consist of $|G|$ disjoint collections of the vanishing circles of $L$. Thus the claim follows from $\sigma(L) = 0$.
\end{proof}

\subsubsection{Stabilizations of $X$}\label{sssec:StabilizationsX}Our construction of $X$ carries through even if we modify $F_G$ by adding genus to it $\Z_{2}\times G$-equivariantly. More precisely, this means $\Z_{2}\times G$-equivariantly removing $\epsilon|G|$ copies of  $\S^0\times \mathrm{int}(\D^2)$ from $F_G$ and  gluing in $\epsilon|G|$ copies of  $\D^1\times \S^1$ along the boundary, where $\epsilon = 1$ if $\S^0\times \ast$ is embedded in the fixed point set of $\tau$, and $\epsilon = 2$ otherwise.

We refer to the induced operation on $X$ as \emph{stabilization}. 
\subsection{Killing $\pi_1(X)$} We will kill the fundamental group of $X$ by performing  Luttinger surgeries equivariantly. In order to find a suitable collection of surgery tori we may have to stabilize our original $X$ several times (see \Cref{sssec:StabilizationsX}). With a slight abuse of notation we will still denote this manifold as $X$.

\subsubsection{Step 1.}The fundamental group of $X$ is generated by curves in the fiber $F_G$ and curves in the base $\Sigma_4$. By performing $|G|$-times the same surgeries done in \cite{BaykurStipsiczSzabo}, we kill the generators coming from $\Sigma_4$ and from the copies of $F^\circ\subset F_G$. 
It remains to kill the loops in $F_G$ induced by the the graph $\Gamma$, i.e. $\pi_1(F_G)$ for $F=\S^2$.

\subsubsection{} Denote by $S_G$ the surface $F_G$ obtained for $F = \S^2$ and let $\genus$ be its genus. 
Let 
\begin{equation}\label{def:LoopGammas}\gamma_1,\dots, \gamma_{2\genus}\subset S_G\end{equation}
be a collection of embedded closed loops supporting $\pi_1(S_G)$, i.e. attaching $2$-cells along \eqref{def:LoopGammas} results in a $1$-connected space. We can assume, up to an arbitrarily small $C^\infty$-perturbation, that each curve in \eqref{def:LoopGammas} intersects transversally its $G$-orbit.

\begin{lem}\label{lem:StabilizationDisjoint}
    Up to $\Z_{2}\times G$-equivariant stabilizations of $S_G$ and a modification of the curves \eqref{def:LoopGammas},  we can assume that the $G$-orbit of
     $\gamma_i$
    consists of disjoint loops  for any $i=1,\dots, 2\genus$.
\end{lem}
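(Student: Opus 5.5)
We remove the intersections between $\gamma_i$ and its translates by adding handles equivariantly, one intersection orbit at a time. Fix $i$ and write $\gamma=\gamma_i$. By the genericity assumption, $\gamma$ meets each translate $g\gamma$ ($g\neq e$) transversally in finitely many points. The set of such intersection points, as $g$ ranges over $G\setminus\{e\}$, is finite and $G$-invariant. The aim is to find, for each point $x\in\gamma\cap g\gamma$, a modification of $S_G$ near the whole orbit $(\Z_2\times G)\cdot x$ after which one of the two strands through $x$ can be rerouted over a new handle.

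\textbf{Local move at one orbit of crossings.} Take $x\in\gamma\cap g\gamma$ and a small disk $U$ around $x$. Since the $\Z_2\times G$-action on $S_G$ is free on the $G$-factor, after shrinking $U$ its $G$-translates are pairwise disjoint. The $\Z_2$-factor acts by $\tau$ on fibres and may fix points, which is why the stabilization of \Cref{sssec:StabilizationsX} distinguishes the cases $\epsilon=1,2$.

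Inside $U$ the two strands are $\gamma$ and $g\gamma$. Choose a point $q\in U$ on one side of the $g\gamma$ strand near $x$, and let $q'$ be the point reflected across $g\gamma$. Stabilize equivariantly by removing small disks around the orbits of $q$ and $q'$ and gluing in tubes $\D^1\times\S^1$, chosen compatibly with $\tau$. Then replace the short arc of $\gamma$ crossing $g\gamma$ by an arc that enters the tube at $q$ and exits at $q'$. This jumps over $g\gamma$ without meeting it, and $x$ is removed.

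The new curve $\gamma'$ must be $G$-equivariantly compatible: we replace $h\gamma$ by $h\gamma'$ for every $h$. The modification of $h\gamma$ takes place inside $hU$. Since $hU\cap U=\emptyset$ for $h\neq e$, the translates of the rerouted arcs use distinct tubes. We arrange the tubes to have disjoint cores, attached at distinct points of $hU$, so no new crossings appear among translates. The symmetric crossing $g^{-1}x\in g^{-1}\gamma\cap\gamma$ is the translate of $x$ and is resolved simultaneously. Iterating over the finitely many orbits of crossings gives a curve whose $G$-orbit consists of disjoint loops. Stabilizations for different $i$ are performed in disjoint neighborhoods, so they do not interfere.

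\textbf{Checking the supporting property.} After stabilization, the new surface $S'_G$ has extra genus, and $\{\gamma'_j\}$ must still support $\pi_1(S'_G)$. Each new tube contributes two new generators: its meridian $m$ and a longitude $\ell$ running through it. I would therefore add to the family, equivariantly, the $G$-orbits of these meridians and longitudes. The meridian circles of distinct tubes are disjoint. For the longitude, pick a dual arc closed up in a small region near the tube; choosing these within the disjoint sets $hU$ makes the orbit disjoint as well. With these added, the $\gamma'_j$ together with the $m$'s and $\ell$'s clearly kill $\pi_1$: surgering along the meridians undoes the stabilizations and recovers $S_G$ with the original curves up to homotopy.

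\textbf{Main obstacle.} The step I expect to be delicate is compatibility with $\tau$, which may have fixed points on $S_G$ (the fixed circle in $D$) and makes the $\Z_2\times G$-action non-free on the fibre. If a crossing $x$ lies on $\mathrm{Fix}(\tau)$, I would first isotope the curves slightly, equivariantly under $G$, so that no crossing lies on the fixed set. Then the $\Z_2\times G$-orbit of $U$ is a disjoint union of disks, and the stabilization uses $\epsilon=2$ copies per $G$-orbit. If instead we need $\epsilon=1$ with tube endpoints swapped by $\tau$, we use $q'=\tau(q)$ with $U$ centred on the fixed set.

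One more check is needed: the loops in \eqref{def:LoopGammas} are only permuted by $\tau$ up to the choice of family, and the claim concerns only $G$-orbits. So it suffices that tubes attached for $\tau$-images do not create new intersections. This follows by again placing them in pairwise disjoint translates of $U$.
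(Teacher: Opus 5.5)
Your proposal is correct and is essentially the paper's argument. You equivariantly attach a handle at each orbit of crossings, reroute $\gamma_i$ over it, and add the orbits of the handle's meridian and longitude (the paper's $\alpha_j,\beta_j$) to the family; your homotopy argument for why the enlarged family still supports $\pi_1$ usefully spells out a step the paper only asserts.
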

\begin{proof}
    Suppose that for some  $g\in G$ and $i$, $\gamma_i$ and $g\cdot \gamma_i $ intersect transversally at $p$. Then, locally at $p$, we stabilize $S_G$ by adding genus $\Z_{2}\times G$-equivariantly and modify $\gamma_i$ (and his orbit)
    as in \Cref{Pic:StabilizationIntersections}.
    To take care of the newly added genus, we  add new  loops 
    $$\alpha_1,\beta_1,\dots, \alpha_{N}, \beta_N$$
    to \eqref{def:LoopGammas},  defined by the orbit of the loops shown in \Cref{Pic:StabilizationIntersections}, thus  $N = |G|$ or $N = 2|G|$ depending whether $p$ belongs to the fixed point set of the $\Z_{2}$-action. Notice that the $G$-orbit of the $\alpha_i$s or the $\beta_i$s has no intersections. Now the new collection of loops supports the fundamental group of the stabilized surface and we removed the intersection at $p$ without introducting new intersections. We repeat this operation until the thesis is satisfied.
\end{proof}

\begin{figure}
    \centering
    \includegraphics[width=0.7\linewidth]{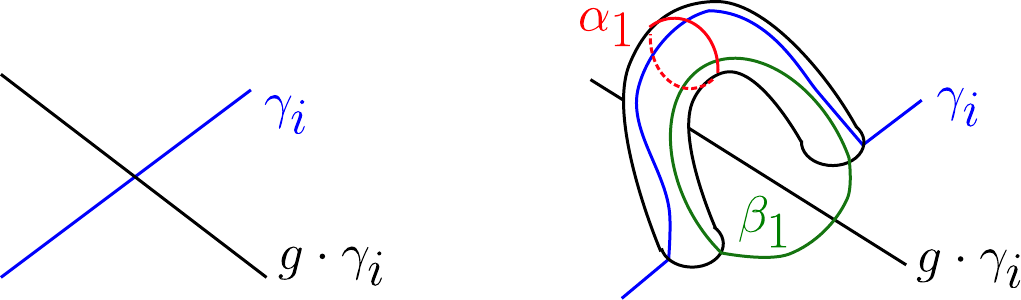}
    \caption{Left: a neighbourhood of $p$ in $S_G$. Right: addition of genus and modification of $\gamma_i$ with consequent elimination of the intersection point. Notice that the whole orbit of $\gamma_i$ is modified by this operation.}
    \label{Pic:StabilizationIntersections}
\end{figure}
In view of \Cref{lem:StabilizationDisjoint}, we can assume that the $G$-orbit of the $\gamma_i$s consists of disjoint loops. Up to reorder \eqref{def:LoopGammas},
  there is $s\leq 2g$, such that $\gamma_1,\dots, \gamma_s$ belong to different $G$-orbits and for any $i>s$, $\gamma_{i} = g_{i,j}\cdot \gamma_j$ for some $j\leq s$, $g_{i,j}\in G$.

  Now for each $i=1,\dots, s$, we choose a point $p_i \in \gamma_i$ not fixed by $\Z_{2}$  and perform a $\Z_{2}\times G$-equivariant stabilization of the surface and modify $\gamma_i$ at $p_i$ as in \Cref{Pic:StabilizationIntersections}. We will still denote the result by $\gamma_i$. Note that this modifies the whole  orbit of $\gamma_i$. We add the $\alpha_i$s and $\beta_i$s to our new collection of curves, which now looks like
 
 \begin{equation}\label{def:LoopsStabilized}
 \gamma_1,\dots, \gamma_{2\genus},\alpha_1,\dots,\alpha_{2|G|s}, \beta_1,\dots, \beta_{2|G|s}    
 \end{equation}
 
 Define 
 \begin{align*}
     & \alpha_i^* = \beta_i & & \beta_i^* = \alpha_i \\
    &  \gamma_i^* = \alpha_i \ \text{ for $i\leq s$} & & \gamma_i^* = (g_{i, j}\gamma_j)^* = g_{i,j}\cdot \alpha_j \ \text{ for $i> s$}
 \end{align*}
 \begin{lem} \label{lem:DualCurves}The collection \eqref{def:LoopsStabilized} supports the fundamental group of the stabilized surface.
 Moreover, among all the $G$-orbit of the curves \eqref{def:LoopsStabilized}, $\gamma_i^*$ intersects only $\gamma_i$ in one point and one of the $\beta_i$s, $\alpha_i^*$ intersects only $\alpha_i$ in one point, $\beta_i^*$ intersects only $\beta_i$ in one point and possibly one of the $\gamma_i$s.
 \end{lem}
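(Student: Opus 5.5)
The plan is to read everything off the local model of the stabilization in \Cref{Pic:StabilizationIntersections}. There are two separate claims: the collection \eqref{def:LoopsStabilized} supports $\pi_1$, and the dual curves have the stated intersection pattern. I would handle them in that order.

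\textbf{Support of $\pi_1$.} Let $S'$ be the stabilized surface. Each stabilization at $p_i$, and at its $\Z_2\times G$-translates, is a connected sum with a torus. Concretely, remove two small discs near $p_i$ and glue in a tube $\D^1\times\S^1$. The new handle carries the meridian loop $\alpha$, the core circle of the tube, and the loop $\beta$ running once over the handle. The modified $\gamma_i$ is the old $\gamma_i$ rerouted across the tube near $p_i$.

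To see that attaching $2$-cells along all curves of \eqref{def:LoopsStabilized} gives a simply connected space, I would argue with van Kampen and small homotopies.
\begin{enumerate}
\item Attaching $2$-cells along the $\alpha$'s and $\beta$'s of one handle caps off that handle. Up to homotopy, this collapses the punctured torus to a disc, or to a wedge of a disc and spheres.
\item In the quotient, the modified $\gamma_i$ is then homotopic to the original $\gamma_i$, since the detour across the tube becomes null-homotopic once $\alpha$ and $\beta$ bound discs.
\item After this, the complex is homotopy equivalent to $S_G$ with cells attached along the original collection \eqref{def:LoopGammas}, up to a wedge with spheres. That space is $1$-connected by assumption.
\end{enumerate}
The stabilizations done earlier in \Cref{lem:StabilizationDisjoint} are already built into $S_G$ and the $\gamma$'s, so only the final round needs to be treated here.

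\textbf{Intersection pattern.} Here I would use the choices made in the construction.
\begin{enumerate}
\item The $G$-orbits of the $\gamma_i$ are pairwise disjoint, by \Cref{lem:StabilizationDisjoint}.
\item The stabilization points $p_i$ are not fixed by $\Z_2$, so the handles attached at distinct points of an orbit are disjoint.
\item Each $\alpha,\beta$ lives inside its own handle.
\end{enumerate}
Within one handle, $\alpha$ and $\beta$ meet transversally in exactly one point, so $\alpha^*=\beta$ meets only $\alpha$, once. The curve $\beta^*=\alpha$ meets $\beta$ once, and it also meets the rerouted $\gamma$ passing through the tube, which is the "possibly one of the $\gamma_i$s" in the statement. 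The rerouted $\gamma_j$ crosses the tube once, so $\gamma_j^*=\alpha_j$ meets $\gamma_j$ in one point and meets its own $\beta$. No other curve enters that handle, because the other handles sit around other points of the orbit and the other $\gamma$'s are disjoint from $\gamma_j$ near $p_j$. For $i>s$, apply $g_{i,j}$ to the picture: the whole configuration is $G$-equivariant, so the same statements transfer to $\gamma_i^*=g_{i,j}\cdot\alpha_j$.

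\textbf{Main obstacle.} The delicate part is the bookkeeping showing that no unintended intersections arise between translates. For example, $g\cdot\alpha_j$ must miss $\gamma_i$ when $i\ne g\cdot j$. To control this, I would take the stabilization discs small enough that each handle meets only the single curve $g\cdot\gamma_j$ through $g\cdot p_j$. Then every intersection is local to a handle, and the statement reduces to the one-handle picture in \Cref{Pic:StabilizationIntersections}.
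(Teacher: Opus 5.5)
Your proposal is correct and follows the same route as the paper, which gives no argument for this lemma beyond the local stabilization model of \Cref{Pic:StabilizationIntersections}; your van Kampen capping argument for the support claim spells out what the paper leaves implicit. One point needs more care: \Cref{lem:StabilizationDisjoint} gives disjointness only within each $G$-orbit, not between distinct orbits (these must intersect, since pairwise disjoint curves cannot kill $H_1$ of a positive-genus surface), so the fact that each handle meets only one curve really comes from two choices: $p_j$ must avoid the finitely many crossings of $\gamma_j$ with translates of the other curves, and $\beta$ must run through the tube parallel to the rerouted $\gamma_j$ and close up along the bypassed arc, so that $\beta\cap\gamma_j=\emptyset$.
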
   

 Thus after stabilizing $X$, we can define the following Lagrangian tori in $X$: 
\begin{align*}
 & T_{\gamma_i} :=  x_1\times \gamma_i & & T_{\alpha_i} := x_1\times \alpha_i  & & T_{\beta_i} := x_2\times \beta_i,
\end{align*}
where $i=1,\dots, s$.
To be more precise, if there are intersections between the tori $T_{\gamma_i}, T_{\alpha_j}$ we take a parallel pushoff of $x_1$ on $\Sigma_4$ to get rid of them.
In this way,  the  tori $(\Z_{2}\times G)\cdot \{T_{\gamma_i}, T_{\alpha_j}, T_{\beta_k}\}_{i,j,k}$ are  disjoint thanks to \cref{rem:r_loops}, and do not meet the surgery tori used in Step~1. 
Moreover, by \Cref{lem:DualCurves}, each torus has a dual torus
\begin{align*}
 & T_{\gamma_i}^* := y_1 \times \gamma_i^* 
 & & 
 T_{\alpha_i}^* := y_1\times \alpha_i^*
 &  & T_{\beta_i}^* := y_2\times \beta_i^*,
\end{align*}
that intersects it once and does not intersect any other torus in  $(\Z_{2}\times G)\cdot \{T_{\gamma_i}, T_{\alpha_j}, T_{\beta_k}\}_{i,j,k}$ nor the tori used for the surgeries in Step~1. 

Here it is key that the loops associated with the $\beta_i$s are $x_2,y_2$ and $x_3,y_3$ (due to the involution) which are disjoint from the loops $x_1,y_1$ and $x_4,y_4$
  associated with the $\gamma_i$s and $\alpha_i$s.
This is precisely the reason why we had to enlarge the base of $B$ to $\Sigma_4$.

\subsubsection{Step 2}
After stabilizing $X$ and performing the surgeries of Step~1, we  perform equivariant Luttinger surgery  on the tori $(\Z_{2}\times G)\cdot \{T_{\gamma_i}, T_{\alpha_j}, T_{\beta_k}\}_{i,j,k}$, using as longitude the subscript curve. We denote by $\X$  the resulting manifold.
\begin{lem}\label{lem:ManifoldXForZ2G}
    $\X$ carries a free $\Z_{2}\times G$ action, $\sigma(\X) = 0$ and $\pi_1(\X) = 1$. In addition, 
    $\X$ is symplectic and irreducible. 
\end{lem}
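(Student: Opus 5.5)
We check the four claims in turn: freeness of the action, vanishing signature, simple connectivity, and symplecticity with irreducibility.

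\emph{Free action.} This is formal. All surgeries of Step~1 and Step~2 are performed on entire $\Z_{2}\times G$-orbits of pairwise disjoint tori. We use equivariant trivializations of the tubular neighbourhoods and the same framing and gluing data on each torus of an orbit. The free action of $\Z_2\times G$ on $X$ therefore restricts to the complement of the tori and extends over the reglued $\T^2\times\D^2$'s. The extension is free because the action permutes the tori of an orbit freely: the action on $X$ is free, so no element maps a torus to itself.

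\emph{Signature.} Torus surgery does not change the signature. Novikov additivity along $\T^3$ boundaries, together with the fact that $\T^2\times\D^2$ has zero signature, shows that $\sigma(\X)=\sigma(X)=0$ by the previous lemma.

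\emph{Simple connectivity.} Here we apply \Cref{lem:MeridianTriviality} and \Cref{lem:SurgeryKillingLongitude} iteratively, as in the proof of \Cref{lem:TopPropertiesXp}.
\begin{enumerate}
\item After Step~1, as in \cite{BaykurStipsiczSzabo}, $\pi_1$ is generated by the loops of $S_G$, hence by the stabilized collection \eqref{def:LoopsStabilized} and its $G$-orbits. The loops $x_i,y_i$ of the base and the loops of the copies of $F^\circ$ are already null-homotopic.
\item Each Step~2 torus $T$ has a geometric dual torus $T^*=y_j\times c$, and $T^*$ is disjoint from every other surgery torus. The base loop $y_j$, with $j\in\{1,2\}$ or its image under $r$, is trivial in the complement of all the tori. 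This is because the Step~1 surgeries kill the base generators in the complement of the fibre-direction tori, exactly as in item \ref{p_generatorNullHmtpic}) of \Cref{prop:PropertiesOfToriGamma_i}.
\item By \Cref{lem:MeridianTriviality} with $g=1$, the meridian of each torus is therefore trivial.
\item By \Cref{lem:SurgeryKillingLongitude}, each surgery quotients $\pi_1$ by the normal closure of the longitude, namely $\gamma_i$, $\alpha_i$, $\beta_i$ or one of their translates.
\end{enumerate}
Since the collection \eqref{def:LoopsStabilized} supports $\pi_1(S_G)$ by \Cref{lem:DualCurves}, we conclude that $\pi_1(\X)=1$.

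The main obstacle is step 2: showing that the base loop of each dual torus stays null-homotopic in the complement of \emph{all} the Step~2 tori, which are not present in the Baykur--Stipsicz--Szab\'o setting. We would first try the following argument. The null-homotopies of $x_j,y_j$ produced in Step~1 use only discs coming from the Step~1 Luttinger surgeries. These discs lie near the Step~1 tori, which are products of base curves with curves in the copies of $F^\circ$. The Step~2 tori use only fibre curves supported in the tubes of $S_G$, away from the copies of $F^\circ$. Hence these discs can be isotoped off the Step~2 tori. This is where the separation of base curves provided by $\Sigma_4$ and \Cref{rem:r_loops} enters.

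\emph{Symplecticity and irreducibility.} $\X$ is symplectic because it arises from the symplectic manifold $X$ by Luttinger surgeries on Lagrangian tori \cite{luttinger1995lagrangian}. The Step~2 tori are products of a base curve and a fibre curve, so they are Lagrangian for the Lefschetz fibration structure, after taking parallel pushoffs. Irreducibility follows from a theorem of Hamilton--Kotschick: minimal symplectic $4$-manifolds with residually finite fundamental group are irreducible. Since $\pi_1(\X)=1$, it suffices to check minimality. Here we would argue as in \cite{BaykurStipsiczSzabo}: $X$ is minimal since it is a Lefschetz fibration over a base of positive genus, and Luttinger surgery preserves minimality (Ho--Li). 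Hence $\X$ is minimal, and therefore irreducible.
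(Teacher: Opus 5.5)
Your proposal is correct and follows essentially the paper's route. Freeness comes from equivariance of the surgeries, $\sigma$ is unchanged by torus surgery, $\pi_1$ is killed via the dual tori together with \Cref{lem:MeridianTriviality} and \Cref{lem:SurgeryKillingLongitude} (the paper likewise only asserts that $y_1,y_2$ become null-homotopic after Step~1, so the obstacle you flag is not treated more carefully there), and symplecticity and irreducibility come from Luttinger surgery, Ho--Li and Hamilton--Kotschick.

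Two small corrections. First, Stipsicz's minimality criterion needs the Lefschetz fibration to be \emph{relatively} minimal, which the paper gets from the relative minimality of $B$ and hence of $L$ and $X$. Second, trivial stabilizers of the surgery tori do not follow from freeness of the action on $X$; they come from \Cref{rem:r_loops} and \Cref{lem:StabilizationDisjoint}.
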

\begin{proof} Since  the surgeries are done equivariantly, the $\Z_{2}\times G$-action carries over the result of the surgery. 
Since $\sigma(X)=0$ and surgeries preserve the signature, $\sigma(\X)=0$. 

Now we will show that $\pi_1(\X) =1$.
The surgeries in Step~1 kill all the generators of $\pi_1(X)$ except for those coming from $\pi_1(S_G)$.
The $\Z_{2}\times G$-orbit of the longitudes of the tori $\{T_{\gamma_i}, T_{\alpha_j}, T_{\beta_k}\}_{i,j,k}$ contain \eqref{def:LoopsStabilized} which supports $\pi_1(S_G)$.
Thanks to the existence of the dual tori we can now invoke \Cref{lem:SurgeryKillingLongitude} to conclude.
Notice that the curves $y_1, y_2\subset \Sigma_2$
become null-homotopic after Step~1.

To show that $\X$ is symplectic and irreducible we mimic the argument of \cite[Thm. 9]{BaykurHamada}.
Since all the tori are Lagrangian, Luttinger surgery yields a symplectic manifold.  $B$ is a relatively minimal Lefschetz fibration \cite{BaykurHamada}, hence so are $L$ and $X$. Thus  $X$ is minimal because $\mathrm{genus}(\Sigma_4)>0$  \cite{StipsiczRelativelyMinimalMinimal}.
Since $\X$ is obtained via Luttinger surgeries, $\X$ is also minimal by \cite[Lemma 5.6]{HoLiLuttingerSurgery}. Now  $\X$ is minimal, and a simply-connected symplectic manifold  which is minimal is also irreducible \cite{HamiltonKotschick}. 
\end{proof}

We can now prove \Cref{Thm2}
\begin{proof}[Proof of \Cref{Thm2}] Construct the manifold $\X$ of \Cref{lem:ManifoldXForZ2G} associated with $G$.

Let $\mathcal{K} =\{K_n\}_{n\in \N} $ be  a sequence of fibered knots in  $\S^3$ with distinct symmetrized  Alexander polynomial $\{\Delta_{K_n}(t)\}_n$.
Now, for $K\in \mathcal K$,  perform $\Z_{2}\times G$-equivariantly knot surgeries  \cite{Fintushel1998KnotsLinksAnd4Manifolds} on $\X$  using the tori of \Cref{rem:ToriKnotSurgery}, denote the result of the surgeries as $\X_{K}$. The manifolds $\{\X_K\}_{K\in \mathcal K}$ satisfy the conclusions of \Cref{lem:ManifoldXForZ2G}. Nevertheless, they are pairwise non-diffeomorphic, indeed the Seiberg-Witten series of $\X_{K_n}$ is given by $SW(\X_{K_n}) = SW(\X)(\Delta_{K_n}(F_G^2))^{2|G|}$ \cite{Fintushel1998KnotsLinksAnd4Manifolds} and $SW(\X)\neq 0$ because $\X$ is symplectic \cite{taubes1994seiberg}.

Consider the manifolds 
\begin{equation}\label{eq:Examples}
    \{\X_K/ (\Z_{2}\times G)\}_{K\in \mathcal K}.
\end{equation}
These manifolds are  pairwise non-diffeomorphic because are distinguished by their universal cover. On the other hand they have fundamental group $\Z_{2}\times G$, share the same Euler characteristic and have signature zero, because these  are multiplicative under coverings. 
There are only finitely many topological, closed, oriented manifolds with such invariants  \cite[pg.87]{HambletonKreckClassification}. Therefore there is an infinite subset of \eqref{eq:Examples} of the same homeomorphism type, which we will call $Q$.  Next we compute $b^+(Q)$, since $\sigma(Q) = 0$, $$b^+(Q) = (\chi(Q)-2)/2 = \frac{\chi(\X)}{4|G|} - 1,$$
we will  show that $\chi(\X)/4|G|$ is odd.

Since $\X$ is obtained by surgery along tori from $X$, $\chi(\X) = \chi(X)$, and the latter is a Lefschetz fibration with $v_X$ vanishing circles.
Consequently,
$$\frac{\chi(\X)}{4|G|} = \frac{\chi(\Sigma_4)\chi(F_G) + v_X}{4|G|} = \frac{\chi(\Sigma_4)\chi(S_G)}{4|G|} - \frac{\chi(\Sigma_4)2|G|\mathrm{genus}(F)}{4|G|} + \frac {v_X}{4|G|}, $$
where we exploited that $\mathrm{genus}(F_G) = \mathrm{genus}(S_G) + |G|\mathrm{genus}(\mathrm{F}).$
It holds that $v_X = |G|v_L = 2|G|v_B$  where $v_L$ ($v_B$) is the number of vanishing circles of $L$ ($B$). Since $v_B = 4$, $\frac{v_X}{4|G|}$ is even.
Also $\mathrm{genus}(F)$ is divided by $4$ because is twice the genus of the fibration $B$ which is even. Hence,
$$\frac{\chi(\X)}{4|G|} \equiv \frac {\chi(\Sigma_4)\chi(S_G)}{4|G|} =  \frac{-6 \cdot 2(1-\mathrm{genus}(S_G))}{4|G|} \equiv \frac{(1-\mathrm{genus}(S_G))}{|G|}\mod 2,$$

Recall that a stabilization
  of $X$ along points fixed by $\tau$ increases $\mathrm{genus}(S_G)$ by $|G|$ (see \Cref{sssec:StabilizationsX}).
  Hence, after possibly a stabilization of $X$, $b^+(Q)$ is even. The manifolds \eqref{eq:Examples} are irreducible because have finite fundamental group and their universal covers are irreducible. \end{proof}

\nocite{*}
\bibliographystyle{alpha}

\bibliography{ref}
\bigskip

\end{document}